\newcommand{\R}{\mathbb{R}}
\newcommand{\ep}{\varepsilon}
\DeclareMathOperator{\co}{co}
\renewcommand{\geq}{\geqslant}
\renewcommand{\leq}{\leqslant}
\newcommand{\restricted}{\mathord{\upharpoonright}}
\DeclareMathOperator{\spann}{span}
\newcommand{\Lip}{{\mathrm{Lip}}_0}
\newcommand{\supp}{\operatorname{supp}}
\newcommand\restr[2]{\ensuremath{\left.#1\right|_{#2}}}
\newtheorem{Theorem}{Theorem}
\newtheorem{theorem}{Theorem}[section]
\newtheorem{lemma}[theorem]{Lemma}
\newtheorem{proposition}[theorem]{Proposition}
\newtheorem{corollary}[theorem]{Corollary}
\theoremstyle{definition}
\newtheorem{definition}[theorem]{Definition}
\theoremstyle{remark}
\numberwithin{equation}{section}
\def\fnote#1{\footnote}
\def\ignora#1{}
\def\n3#1{\left\vert  \! \left\vert \! \left\vert \, #1 \, \right\vert \!
  \right\vert \! \right\vert }
\newcommand{\pten}{\ensuremath{\widehat{\otimes}_\pi}}
\begin{document}

\title{A characterisation of the Daugavet property in spaces of vector-valued Lipschitz functions }

\author{Rub\'en Medina}
\address[R. Medina]{Universidad de Granada, Facultad de Ciencias. Departamento de An\'alisis Matem\'atico, 18071-Granada (Spain); and Czech Technical University in
Prague, Faculty of Electrical Engineering. Department of Mathematics, Technick\'a 2, 166 27 Praha 6 (Czech Republic)
	\newline
 \href{https://orcid.org/0000-0002-4925-0057}{ORCID: \texttt{0000-0002-4925-0057} }
	}
\email{\texttt{rubenmedina@ugr.es}}

\author{Abraham Rueda Zoca}
\address[A. Rueda Zoca]{Universidad de Granada, Facultad de Ciencias.
Departamento de An\'{a}lisis Matem\'{a}tico, 18071-Granada
(Spain)
	\newline
	\href{https://orcid.org/0000-0003-0718-1353}{ORCID: \texttt{0000-0003-0718-1353} }}
\email{\texttt{abrahamrueda@ugr.es}}
\urladdr{\url{https://arzenglish.wordpress.com}}

\subjclass[2020]{46B20, 46B28, 47A50, 51F30}

\keywords {Lipschitz-free spaces ; Tensor product; Daugavet property; Octahedral norms; Perturbation of Lipschitz maps }

\maketitle

%\markboth{Rub\'en Medina and Abraham Rueda Zoca}{Daugavet property in tensor products of Lipschitz-free spaces}

\begin{abstract}
Let $M$ be a metric space and $X$ be a Banach space. In this paper we address several questions about the structure of $\mathcal F(M)\pten X$ and $\Lip(M,X)$. Our results are the following:

\begin{enumerate}
    \item We prove that if $M$ is a length metric space then $\Lip(M,X)$ has the Daugavet property. As a consequence, if $M$ is length we obtain that $\mathcal F(M)\pten X$ has the Daugavet property. This gives an affirmative answer to \cite[Question 1]{gpr18} (also asked in \cite[Remark 3.8]{lr2020}).
    \item We prove that if $M$ is a non-uniformly discrete metric space or an unbounded metric space then the norm of $\mathcal F(M)\pten X$ is octahedral, which solves \cite[Question 3.2 (1)]{blr18}.
    \item We characterise all the Banach spaces $X$ such that $L(X,Y)$ is octahedral for every Banach space $Y$, which solves a question by Johann Langemets.
\end{enumerate}

\end{abstract}

\section{Introduction}

In this note, we will focus mainly on the study of Lipschitz mappings from a metric space $M$ to a Banach space $X$, that is, we will focus on the space $\Lip(M,X)$. In particular, we are mainly interested in the Daugavet property on these spaces since it is known that $\Lip(M)$ enjoys it if, and only if, the metric space $M$ is length. The natural question that arises in this setting is therefore whether the same metric characterisation works in the vector valued case, that is, whether the space $\Lip(M,X)$ being Daugavet is equivalent to $M$ being length regardless which $X$ is used as codomain. This problem has been around since the latter characterisation of Lipschitz spaces was found in \cite[Theorem 3.5]{gpr18} and it has been explicitly asked in \cite[Question 1]{gpr18} and \cite[Remark 3.8]{lr2020}. We give here a complete positive solution.

\subsection{Motivation and background}

A Banach space $X$ is said to have the Daugavet property if every rank-one operator $T:X\longrightarrow X$ satisfies the equality
\begin{equation}\label{ecuadauga}
\Vert T+I\Vert=1+\Vert T\Vert,
\end{equation}
where $I$ denotes the identity operator. The previous equality is known as \emph{Daugavet equation} because I.~Daugavet proved in \cite{dau} that every compact operator on $\mathcal C([0,1])$ satisfies (\ref{ecuadauga}). Since then, many examples of Banach spaces enjoying the Daugavet property have appeared. E.g. $\mathcal C(K)$ for a perfect compact Hausdorff space $K$; $L_1(\mu)$ and $L_\infty(\mu)$ for a non-atomic measure $\mu$; or preduals of Banach spaces with the Daugavet property (see \cite{kkw,kssw01,wer} and references therein for a detailed treatment of the Daugavet property).

The question of determining when the space of Lipschitz functions $\Lip(M)$ has the Daugavet property has attracted the attention of many researchers since in \cite[Section 6, Question 1]{wer} D. Werner asked whether $\Lip([0,1]^2)$ has the Daugavet property. As a consequence of succesive works \cite{ikw,gpr18,am19} the following characterisation was obtained: Given a complete metric space $M$, then $\Lip(M)$ has the Daugavet property if, and only if, $M$ is length if, and only if, $\mathcal F(M)$ has the Daugavet property, where $\mathcal F(M)$ stands for the Lipschitz-free space over $M$ (also known as transportation cost space). All the above are equivalent to the absence of strongly exposed points of the unit ball of $\mathcal F(M)$. 

In the above line, the first result appeared in \cite[Proposition 3.11]{gpr18} where, as a main consequence, it is obtained that $\Lip(M,H)$ has the Daugavet property when $H$ is a Hilbert space. This result is obtained as a consequence of a well known result of extension of Lipschitz functions between Hilbert spaces due to Kirszbraun (see \cite[Theorem 1.12]{bl00}). More general results were obtained in \cite{rueda22}, where it was proved that $\Lip(M,X)$ has the Daugavet property for every Banach space $X$ if $M$ is a convex subset of a Hilbert space or if $M$ is a Banach space whose dual is isometrically $L_1(\mu)$. In all of the above cases, it is heavily used that given any finite subset $N$ of $M$ and any Lipschitz function $\varphi: N\longrightarrow M$, there exists an extension $\phi:M\longrightarrow M$ which almost preserves the Lipschitz constant. That is, a (demanding) property of extension of Lipschitz functions is required as a tool for making perturbations of given Lipschitz functions $f:M\longrightarrow X^*$ almost preserving their Lipschitz constant.

A feature which explains the difficulty of the aforementioned problem arises when one considers dual Banach targets. In such case, $\Lip(M,X^*)=L(\mathcal F(M),X^*)=(\mathcal F(M)\pten X)^*$, where $\mathcal F(M)\pten X$ stands for the projective tensor product of $\mathcal F(M)$ and $X$. Taking into account the immediate fact that the Daugavet property passes from a Banach space to its predual, the above mentioned open question is related to the question of how the Daugavet property is inherited by taking projective tensor products, which is a long standing open problem from \cite[Section 6]{wer}.

In this paper we explore new ideas for the perturbation of Lipschitz functions taking values on a Banach space in order to solve many open questions. Indeed, we obtain the following main result.

\begin{Theorem}\label{theo:mainDauga}
    If $M$ is a complete length metric space and $X$ is a Banach space then $\Lip(M,X)$ has the Daugavet property.
\end{Theorem}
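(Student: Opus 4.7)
The plan is to verify the Daugavet property via its slice reformulation of Kadets--Shvydkoy--Sirotkin--Werner: it suffices to show that for every $f_0 \in S_{\Lip_0(M,X)}$, every slice $S$ of $B_{\Lip_0(M,X)}$, and every $\varepsilon > 0$, there exists $g \in S$ with $\|f_0+g\|_{\Lip} > 2-\varepsilon$.

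As a first step, I would reduce to slices defined by ``molecular'' functionals of the form $\phi = \sum_{i=1}^{N} \lambda_i\, m_{p_i,q_i} \otimes x_i^*$, where $m_{p,q} = (\delta_p - \delta_q)/d(p,q) \in \mathcal F(M)$ and $x_i^* \in X^*$. Trace duality realises $\mathcal F(M) \pten X^*$ isometrically inside $\Lip_0(M,X)^*$, and this subspace is norming for $\Lip_0(M,X)$, because $\|f\|_{\Lip} = \sup_{p\neq q}\sup_{\|x^*\|=1} x^*(f(p)-f(q))/d(p,q)$. A standard argument (reducing to slices coming from a norming set, then approximating in norm on the norming subspace) allows us to assume the slice comes from such a finite $\phi$. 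The crucial feature is that $\phi$ only involves values of $f$ at the finite set $F = \{p_i, q_i : 1 \leq i \leq N\}$.

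Next I would invoke the length hypothesis to select, for any prescribed $r, \delta > 0$, a pair $(p, q) \in M \times M$ with $d(p,q) < r$, $\|f_0(p)-f_0(q)\|/d(p,q) > 1-\delta$, and $\dist(\{p,q\}, F) > 0$. This rests on the standard fact that in a length space the Lipschitz seminorm of any function is realised up to $\delta$ by pairs of arbitrarily small distance (seen by subdividing a near-geodesic along which the secant ratio is preserved by pigeonhole); the freedom to avoid the finite set $F$ is obtained by sliding $(p,q)$ slightly along such a path.

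Now fix any $g_1 \in S$. The heart of the proof is to construct a Lipschitz perturbation $h\colon M \to X$ supported in a small neighbourhood $U$ of $\{p,q\}$ disjoint from $F$, so that $g := g_1 + h$ fulfils three requirements: (a) $h$ vanishes on $F$, whence $\phi(g) = \phi(g_1)$ and $g \in S$; (b) $\|g\|_{\Lip} \leq 1$; and (c) $g(p)-g(q)$ is aligned with $f_0(p)-f_0(q)$ and has norm close to $d(p,q)$, so that $\|(f_0+g)(p)-(f_0+g)(q)\| \geq (2-\varepsilon)\, d(p,q)$. I expect the main obstacle to be precisely the simultaneous achievement of (b) and (c): pushing $g(p)-g(q)$ to be near-maximal in a prescribed vectorial direction while keeping the \emph{global} Lipschitz constant bounded by $1$ (and not by the naive $2$ one would get from a bare sum of two Lipschitz-$1$ functions). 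This is exactly where length enters in an essential way: it furnishes, inside the small neighbourhood $U$, a near-geodesic segment from $p$ to $q$ of arc length close to $d(p,q)$ along which one can design $h$ as a carefully calibrated vector-valued bump with prescribed endpoint behaviour, while the extension of $h$ by $0$ outside a thin tube around this segment retains Lipschitz constant compatible with (b).
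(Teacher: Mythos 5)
Your proposal breaks down at its very first step, and it is exactly where the whole difficulty of the theorem lives. The space $\Lip(M,X)$ is in general not a dual space, and the Kadets--Shvydkoy--Sirotkin--Werner slice test must be verified for slices $S(B_{\Lip(M,X)},\phi,\alpha)$ given by \emph{arbitrary} $\phi\in\Lip(M,X)^*$. There is no ``standard argument'' reducing this to slices coming from a norming set: the span of your molecular functionals (the image of $\mathcal F(M)\pten X^*$ in $\Lip(M,X)^*$) is norming but far from norm-dense in $\Lip(M,X)^*$ (already for $M=[0,1]$, $X=\mathbb R$ this is $L_1$ sitting inside $L_\infty^*$), so a general $\phi$ cannot be approximated in norm by molecular ones, and a slice given by a general $\phi$ need not contain any molecular slice. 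Verifying the condition only on a norming family yields a strictly weaker, $w^*$-type Daugavet property. A concrete warning: $M[0,1]=C[0,1]^*$ passes the slice test for all slices induced by the norming (indeed predual) subspace $C[0,1]$, because $C[0,1]$ has the Daugavet property, yet $M[0,1]$ itself fails the Daugavet property (its discrete part gives an $\ell_1$-summand in an $\ell_1$-decomposition). So even if your construction were completed, it would prove only the molecular-slice statement --- which, via $(\mathcal F(M)\pten X)^*=\Lip(M,X^*)$, suffices for the Daugavet property of $\mathcal F(M)\pten X$ (Corollary \ref{cor:Daugavetvectorval}), but not the stated theorem about $\Lip(M,X)$.

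The paper sidesteps the intractable dual by using the sequential criterion \cite[Theorem 2.1.(5)]{rueda22}: for $f,g\in S_{\Lip(M,X)}$ it builds a sequence $(g_n)\subseteq(1+\varepsilon)B_{\Lip(M,X)}$ with $g_n\to g$ \emph{weakly} and $\Vert f+g_n\Vert\geq 2-\varepsilon$; weak convergence is certified by disjointness of the supports of $g_n-g$ (Lemma \ref{lemmaweaknull}), which handles all functionals simultaneously without ever describing $\Lip(M,X)^*$. Your endgame also has a quantitative flaw you partly anticipated: since $g_1$ may have local Lipschitz constant $1$ near $\{p,q\}$, no bump $h$ with $\Vert h(p)-h(q)\Vert\approx d(p,q)$ can keep $\Vert g_1+h\Vert\leq 1$; one must first flatten $g_1$ on a ball around the chosen point, which the paper does by composing $g$ with the radial map of Proposition \ref{loca0eident} at the unavoidable cost of a factor $\frac{\alpha_n}{\alpha_n-\beta_n}>1$ --- hence the criterion's tolerance $(1+\varepsilon)B_{\Lip(M,X)}$ rather than your rigid requirement (b). Once $g$ is flat near $x_n$, no geodesic tube is needed: the perturbation is a scalar McShane bump tensored with a suitable $y\in S_X$, and the norm of the sum is controlled by the elementary Lemma \ref{lemma:compuortolip}. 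Finally, the length hypothesis is exploited not through near-geodesics but through spreading locality, which supplies \emph{infinitely many} points where $y^*\circ f$ locally almost attains its norm --- needed because the sequential criterion demands a whole weakly convergent sequence, not a single pair $(p,q)$.
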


As a corollary (Corollary \ref{cor:Daugavetvectorval}) we get that if $M$ is a complete length metric space and $X$ is a Banach space then $\mathcal F(M)\pten X$ has the Daugavet property (see the definition below). This solve \cite[Question 1]{gpr18} (also asked in \cite[Remark 3.8]{lr2020}) in complete generality.

One of the new tools behind the proof of the above mentioned Theorem \ref{theo:mainDauga} is Lemma \ref{loca0eident}, which informally says that given any Banach space $X$ there is an almost 1-Lipschitz perturbation of the identity mapping which vanishes at a ball centred at $0$ (we thank Richard Smith who pointed us out the existence of such functions). We also considered in Lemma \ref{prop:idenlocalycero} the ``dual construction'', that is, a function which is the identity on a neighbourhood of $0$ which vanishes out of a bounded subset of $X$.

%---------------------TERMINA PARTE DAUGAVET------------------

Next we continue using the ideas of perturbation of vector-valued Lipschitz functions to solve another open problem on the space $\mathcal F(M)\pten X$.

In order to put the problem into context \cite[Question 3.2 (1)]{blr18}, recall that a Banach space $X$ is said to be \textit{octahedral} if for every $x_1,\dots,x_n\in S_X$ and $\ep>0$ there is $y\in S_X$ such that $\|x_i-y\|\ge2-\ep$ for every $i=1,\dots,n$ (see Definition \ref{defi:octahedral} and the surrounding paragraphs for background). With the terminology of octahedral norms in mind, \cite[Question 3.2 (1)]{blr18} asks whether $\mathcal F(M)\pten X$ is octahedral whenever $M$ is a complete metric space which is not uniformly discrete and bounded. In Section \ref{section:octafree} we give a positive answer to this question.
\begin{Theorem}\label{octamain}
    Let $M$ be a metric space. If $M$ is not a bounded and uniformly discrete space then $\mathcal F(M)\pten X$ is octahedral for every Banach space $X$.
\end{Theorem}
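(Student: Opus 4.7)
The plan is to construct, for any $u_1,\dots,u_n\in S_{\mathcal{F}(M)\pten X}$ and any $\varepsilon>0$, an element $u\in S_{\mathcal{F}(M)\pten X}$ of the form $u=m_{p,q}\otimes x$ with $\|u-u_i\|\geq 2-\varepsilon$ for every $i$, where $m_{p,q}=(\delta_p-\delta_q)/d(p,q)$ is a molecule over a well-chosen pair $(p,q)\in M^2$ and $x\in S_X$. First I would approximate each $u_i$ in norm by a finitely supported tensor $\tilde u_i=\sum_j \mu_{i,j}\otimes x_{i,j}$, gathering the union of the supports together with the base point $0$ into a finite set $F\subset M$. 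Via the duality $(\mathcal F(M)\pten X)^*=\Lip(M,X^*)$ and Hahn--Banach, I would pick $G_i\in B_{\Lip(M,X^*)}$ with $\langle G_i,u_i\rangle>1-\varepsilon$.

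Next I would exploit the dichotomy in the hypothesis to select $(p,q)\in M^2$ such that, writing $t=d(p,q)$ and $R=d(\{p,q\},F)$, both $t$ and $\diam F$ are much smaller than $R$. If $M$ is unbounded, take $p,q$ far away from $F$. If $M$ is not uniformly discrete, take $p\neq q$ with $d(p,q)$ arbitrarily small; should the accumulation points of $M$ happen to lie inside $F$, a preliminary re-approximation of the $\tilde u_i$ by molecules supported at nearby ``generic'' points removes this obstruction. Choose any $x\in S_X$ and $x^*\in S_{X^*}$ with $x^*(x)=1$, and set $u=m_{p,q}\otimes x\in S_{\mathcal{F}(M)\pten X}$.

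For each $i$ I would then construct an almost $1$-Lipschitz witness $F_i\colon M\to X^*$ satisfying $F_i|_F=-G_i|_F$ and $(F_i(p)-F_i(q))(x)\approx t$. The natural prescription is $F_i(a)=-G_i(a)$ for $a\in F$ together with $F_i(p)=(t/2)x^*$ and $F_i(q)=-(t/2)x^*$. Since $R\gg\diam F+t$, an elementary triangle-inequality check shows these values form a $1$-Lipschitz map on the finite set $F\cup\{p,q\}$. The delicate step is then to extend this to a $(1+O(\varepsilon))$-Lipschitz map $M\to X^*$. For this I would use the paper's perturbation machinery (in the spirit of Lemmas~\ref{loca0eident} and~\ref{prop:idenlocalycero}) to locally flatten $-G_i$ in a small neighbourhood of $\{p,q\}$ so that the rank-one scalar bump $\phi\cdot x^*$, with $\phi$ a $1$-Lipschitz scalar function supported near $\{p,q\}$ satisfying $\phi(p)-\phi(q)=t$, can be added without inflating the global Lipschitz constant. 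After rescaling by $(1+O(\varepsilon))^{-1}$ one obtains the required element of $B_{\Lip(M,X^*)}$.

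The conclusion is purely numerical: by construction $\langle F_i,u\rangle\geq 1-O(\varepsilon)$, while $\langle F_i,u_i\rangle=-\langle G_i,u_i\rangle+O(\varepsilon)\leq -1+O(\varepsilon)$, so $\|u-u_i\|\geq\langle F_i,u-u_i\rangle\geq 2-O(\varepsilon)$, yielding octahedrality. The main obstacle is the construction of $F_i$ with Lipschitz constant close to $1$: a naive additive modification $-G_i+\phi\cdot x^*$ can have Lipschitz constant as large as $2$ because the slopes of $-G_i$ and of $\phi\cdot x^*$ may align constructively near $\{p,q\}$, and the whole point of the paper's perturbation lemmas is to remove this interference by flattening $-G_i$ locally while preserving its values on the support set $F$. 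A secondary and more routine difficulty is, in the non-uniformly discrete case, arranging that the close pair $(p,q)$ can be kept far from $F$, which is handled by a small density-type re-approximation.
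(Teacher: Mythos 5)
Your setup (density reduction, duality with $\Lip(M,X^*)$, Hahn--Banach witnesses, and the final numerical estimate) is sound, but the geometric heart of the argument fails in exactly one of the two cases the theorem must cover: $M$ bounded and not uniformly discrete. Your construction requires $R=d(\{p,q\},F)\gg\diam F+t$, and this is simply unattainable in a bounded space once $\diam F$ is of the order of $\diam M$ (which it typically is, since $F$ contains the base point and the supports of all the $\mu_i$). Concretely, take $M=\{0,1\}\cup\{1+\frac1n:n\geq 2\}\subseteq\R$, $X=\R$, $F=\{0,1\}$ and $G_i(z)=z$. The only close pairs $(p,q)$ sit next to the accumulation point $1\in F$, and your prescription $F_i(1)=-G_i(1)=-1$, $F_i(p)=\frac t2 x^*$ forces a slope $\frac{1+t/2}{d(1,p)}\longrightarrow\infty$, so the map is not even Lipschitz-bounded on the finite set, let alone $1$-Lipschitz. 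Your proposed fix (``re-approximating the $\tilde u_i$ at nearby generic points'') cannot help: the obstruction is metric, not combinatorial --- no relocation of finitely many support points creates separation $\gg\diam F$ inside a bounded space. This is why the paper's proof of this case (Theorem \ref{theo:librediscrenouniform}) never asks for separation \emph{in $M$} between the bump location and the supports.

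Your fallback --- flatten $-G_i$ near $\{p,q\}$ and then add the scalar bump $\phi\cdot x^*$ --- is indeed the correct mechanism, and it is what the paper does, but you have left out the ingredient that makes it legitimate. The flattening is a post-composition $\varphi_n^i\circ f_i$ with $\varphi_n^i$ as in Proposition \ref{loca0eident} applied \emph{in the target} $X^*$: for it to be constant near $x_n$ while leaving the values $f_i(m_{ij})$ (hence $f_i(\mu_i)$) untouched, one needs separation of \emph{values}, $\Vert f_i(x_n)-f_i(m_{ij})\Vert\geq R$, which in the example above fails outright ($f_i(p)\approx f_i(1)$). The paper manufactures this separation by taking a sequence $x_n$ with $d_{x_n}\to 0$, running the dichotomy (the values $f_i(x_n)$ either admit a norm-Cauchy subsequence or are $\delta$-separated) and invoking the perturbation Lemma \ref{lemma:separainfisequ}; only then does Lemma \ref{lemma:compuortolip} control $\Vert g_n^i+S_n\Vert\leq(1+\varepsilon)$. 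None of this appears in your proposal, and it is the crux of the bounded case. For the unbounded case your outline is essentially the paper's Theorem \ref{theo:octaunboununidis} (truncate $f_i$ outside a ball via Lemma \ref{lemma:densiunidiscre0}, then bump far away) and is salvageable, though even there the requirement $R\gg t$ is too strong: in $M=\{0\}\cup\{2^n:n\in\N\}$ every distant point has its nearest neighbour at distance comparable to its distance to the origin. The correct estimate does not need small bump amplitude at all: one bounds the cross term by $\frac{\Vert g_i(u)\Vert}{d(u,v)}+\Vert h\Vert\leq\frac{R}{d(x,0)-R}+1$, using only that the bump is globally $1$-Lipschitz, supported far out, and that $g_i$ vanishes there (the paper takes a singleton bump, exploiting uniform discreteness, which is available in that half of the case split; a cone-type scalar bump works without it).
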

Recall that a metric space is said to be uniformly discrete if 
$\inf\{d(x,y)\;;\;x,y\in M,\;x\neq y\}>0.$
% The proof is splited in two theorems: We first prove in Theorem \ref{theo:librediscrenouniform} the case in which $M$ is not uniformly discrete and we then prove in Theorem \ref{theo:octaunboununidis} the case when $M$ is not bounded.
Observe that this result constitutes a step forward for the open question \cite[Question 4.4]{llr}, where it is asked whether $X\pten Y$ is octahedral provided $X$ is octahedral.

We finish Section \ref{section:octafree} with Theorem \ref{theo:bidualfreeoh} where we prove that, when $M$ is a metric space with infinitely many cluster points, then we get the stronger condition that $(\mathcal F(M)\pten X)^{**}$ is octahedral.

We end the paper with Section \ref{section:universaloctaedral}, which is of independent interest, where we answer the following question: when does a Banach space $X$ satisfies that $L(X,Y)$ is octahedral for every Banach space $Y$? This question was  posed by J.~Langemets in the congress International Workshop on Operator Theory and its Applications (IWOTA 2022), Krakow, Poland (Sep 2022)).

% In Theorem \ref{theo:caraunivdomain} we prove, using recent techniques from \cite{rz2023} and local theory of Banach spaces, that a Banach space $X$ satisfies that $L(X,Y)$ is octahedral for every Banach space $Y$ if, and only if, the space $L(Y,X^*)$ is octahedral for every Banach space $Y$, which is in turn equivalent to the following condition on $X^*$: for every $\varepsilon>0$, for every finite dimensional subspace $Z$ of $X^*$ and for every $n\in\mathbb N$, there exists an operator $T:\ell_\infty^n\longrightarrow X^*$ with $\Vert T\Vert\leq 1$ and such that
% $$\Vert z+T(y)\Vert\geq (1-\varepsilon)(\Vert z\Vert+\Vert y\Vert)$$
% holds for every $y\in \ell_\infty^n$ and every $z\in Z$.

\subsection{Terminology}

We will consider only real Banach spaces. Given a Banach space $X$, $B_X$ (resp. $S_X$) stands for the closed unit ball (resp. the unit sphere).

We recall that
the {\it{projective tensor product of two Banach spaces $X$ and $Y$}}, denoted by
$X\pten Y$, is the completion of $X\otimes Y$
under the norm given by 
$$\Vert u\Vert:=\inf \left \{\sum_{i=1}^n \Vert x_i\Vert\Vert y_i\Vert\ :\ n\in\mathbb N, x_i\in X, y_i\in Y,  u=\sum_{i=1}^n x_i\otimes y_i \right\},$$ for every $u\in X\otimes Y$. 

We recall that the space $L(X, Y^*)$ is linearly isometric to the topological dual of $X\pten Y$ by the action $T(x\otimes y):=T(x)(y)$. As an immediate consequence of the above identification we get that $B_{X\pten Y}=\overline{\co}(B_X\otimes B_Y)$. We refer the reader to \cite{ryan} for background on tensor product spaces.

Also, since we are mainly interested in the case when one of the factors is a Lipschitz free space, let us introduce that notion from scratch. A pointed metric space $(M, d, 0)$
is a metric space $(M, d)$ with a selected distinguished point 0 in $M$, called
the base point. For a pointed metric space $M$, we consider the Banach
space $\Lip(M)$ formed by all real-valued Lipschitz functions that vanish at 0 endowed with the norm given by the least Lipschitz constant, that is,
$$\|f\|_{Lip}=\inf\{C>0\;:\;|f(x)-f(y)|\le Cd(x,y)\;,\;\forall x,y\in M\},$$
for every $f\in \Lip(M)$. The Banach space $\Lip(M)$ is a dual space and its
canonical predual is our sought space, often called the \textit{Lipschitz free space} associated to $M$, denoted by
$\mathcal{F}(M)$. It is given by $\mathcal{F}(M) = \overline{\text{span}}\{
\delta_p\}_{p\in M} \subseteq \Lip(M)^{**}$, where
$\delta_p: \Lip(M) \to \R$ is the linear functional defined by $\delta_p(f) = f(p)$ for all $f\in \Lip(M)$. Now, if $X$ is a Banach space we denote $\Lip(M,X)$ to the space of Lipschitz mappings from $M$ to $X$ vanishing at 0 endowed with the Lipschitz norm. We are interested in vector-valued Lipschitz mappings since the isometric identification $\Lip(M,X^*)=L(\mathcal F(M),X^*)$ leads a structure of dual Banach space and its canonical predual is our object of study $\mathcal{F}(M)\pten X$.

We must define now the metric spaces $M$ such that $\Lip(M)$ is Daugavet. These spaces are known as length spaces (see e.g. \cite{bbi}). Specifically, a metric space $(M,d)$ is a \textit{length} space whenever it is arc-connected and for every $x,y\in M$,
$$d(x,y)=\inf\{\text{l}(\alpha)\;:\;\alpha:[0,1]\to M,\;\alpha(0)=x,\;\alpha(1)=y\},$$
where $\text{l}(\alpha)$ stands for the length of the curve $\alpha$. In the proof of the equivalence between $M$ length and $\mathcal{F}(M)$ Daugavet the notion of spreadingly local metric spaces is used and we will need it, too. According to \cite{ikw}, a metric space is said to be \textit{spreadingly local} whenever
$$A_{\ep,f}=\{m\in M\;:\;\inf_{r>0}\|f\|_{Lip}>1-\ep\}$$
is infinite for every $\ep>0$ and $f\in\Lip(M)$. Following the proof of Proposition 2.3 in \cite{ikw} it is easy to see that every length space is spreadingly local, regardless it is complete or not.

In the setting of metric spaces, we are going to make use of the well-known Mcshane extension of Lipschitz functions. Given a metric space $M$, a subset $N$ of $M$ and $f\in \Lip(N)$ there is always an extension $g\in \Lip(M)$ of $f$ to $M$, often called the McShane extension, such that $\|g\|_{Lip}=\|f\|_{Lip}$. In general, there is no McShane extension for vector-valued Lipschitz mappings, which is the main problem that we face.

Let us define now the main geometric properties of Banach spaces we are going to work with throughout the entire note. A slice $S$ of the unit ball $B_X$ of a Banach space $X$ is a subset of $B_X$ of the form
$$S=S(B_X,f,\alpha)=\{z\in B_X\;:\;f(z)>1-\alpha\},$$
For some $f\in X^*$ and $\alpha>0$. If $X$ is a dual space and $f$ belongs to its predual we call the previous set a $w^*$-slice.
\begin{definition}\label{defi:Daugavet}
    Given $X$ a Banach space, we say that $X$ has the \textit{Daugavet property} (or simply that $X$ is Daugavet)  whenever for every $x\in S_X$, $\ep>0$ and $S$ slice of $B_X$ there is $y\in S$ such that $\|x-y\|\ge2-\ep$. If $X$ is a dual space and the latter holds at least for $w^*$-slices we say that $X$ has the $w^*$-Daugavet property (or simply $X$ is $w^*$-Daugavet).
    \end{definition}
Notice that this definition of the Daugavet property is equivalent to the one given at the beginning of the paper \cite[Lemmata 2.1 and 2.2]{kssw01}.

Now we will present two more definitions closely related to the Daugavet property.

\begin{definition}\label{defi:octahedral}
    Given $X$ a Banach space, we say that $X$ is \textit{octahedral} if for every $x_1,\dots,x_n\in S_X$ and $\ep>0$ there is $y\in S_X$ such that $\|x_i-y\|\ge2-\ep$ for every $i=1,\dots,n$.
\end{definition}

\begin{definition}\label{defi:SD2P}
    Given $X$ a Banach space, we say that $X$ enjoys the \textit{strong diameter 2 property} (SD2P for short) whenever every convex combination of slices has diameter 2.
\end{definition}
It is worth mentioning that by \cite[Theorem 2.1]{blr14}, for a Banach space $X$ it is equivalent being octahedral to its dual $X^*$ having the $w^*$-SD2P. Moreover, it follows that if $X$ is a Daugavet space then both $X$ and $X^*$ have octahedral norms \cite[Lemmata 2.8 and 2.12]{kssw01}. For background on diameter two properties, octahedral norms and their interrelations with the Daugavet property we refer the reader to \cite{langemetsthesis,pirkthesis,ruedathesis}.

% XXXXXXXXXXXXXXX ABRAHAM: HAY QUE INTRODUCIR
% 1) NOTACION DE TENSORES PROYECTIVOS
% 2) NOTACION DE LIPSCHITZ FREE
% 3) PROPIEDAD DE DAUGAVET, DIAMETROS DOS, OCTAEDRALIDAD, IMPLICACIONES, REFERENCIAS, OBJETOS NECESARIOS (SLICES, COMBINACIONES CONVEXAS DE SLICES)
% 4) AP,MAP, OPERADORES, OPERADORES DE RANGO FINITO. 

% 6) ESPACIOS METRICOS. LENGTH. EPSILON PUNTOS DE FUNCIONES LIPSCHITZ.
% 7) MCSHANE. COMENTARIOS DE QUE EL VECTORIAL ES FALSO. COMPLEMENTACION LOCAL, LUST.
% 8) RETRACTOS. RELACION CON EXTENSION DE FUNCIONES LIPSCHITZIANAS.

\section{Lipschitz functions on Banach spaces}

Let us start with the following straigthforward lemmata which will shorten future arguments.

\begin{lemma}\label{l1}
    Let $X$ be any Banach space, $\lambda\geq0$ and $f:X\to X$. If $f$ is $\lambda$-Lipschitz when restricted to segments then $f$ is $\lambda$-Lipschitz.
\end{lemma}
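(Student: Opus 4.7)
The statement is essentially immediate once one unpacks the definition, so I would simply spell out what the hypothesis gives at the endpoints of an arbitrary segment. The plan is as follows.

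Fix two arbitrary points $x,y\in X$. The plan is to apply the hypothesis to the specific segment $[x,y]=\{(1-t)x+ty:t\in[0,1]\}$, whose endpoints (corresponding to $t=0$ and $t=1$) are precisely $x$ and $y$. By assumption, the restriction $f\restricted{[x,y]}$ is $\lambda$-Lipschitz, and in particular
\[
\|f(x)-f(y)\|\le \lambda\,\|x-y\|.
\]
Since $x,y\in X$ were arbitrary, this is exactly the statement that $f$ is $\lambda$-Lipschitz on all of $X$.

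There is really no obstacle here: the content of the lemma is the observation that any two points of a Banach space are joined by a segment on which they are the endpoints, so testing the Lipschitz inequality on all segments already tests it on all pairs. I would phrase the proof in two lines, noting only that this reduction will later be convenient because perturbations of $f$ are typically constructed by specifying their behaviour radially or along line segments (as in the constructions of Lemmas \ref{loca0eident} and \ref{prop:idenlocalycero} announced in the introduction), and then Lemma \ref{l1} lets one upgrade a segmentwise Lipschitz estimate to a global one without further work.
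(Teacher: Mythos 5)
Your proof is correct, and it is exactly the intended argument: the paper states Lemma \ref{l1} without proof (labelling it straightforward), and the observation that any pair $x,y$ lies on the segment $[x,y]$, so the segmentwise hypothesis applied to that segment at its endpoints already yields $\|f(x)-f(y)\|\le\lambda\|x-y\|$, is the evident reasoning being left to the reader. Nothing further is needed.
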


\begin{lemma}\label{l2}
    Let $X$ be any Banach space, $a<b<c$ in $\R$ and $f:[a,c]\to X$. If $\restr{f}{[a,b]}$ and $\restr{f}{[b,c]}$ are $\lambda$-Lipschitz for some $\lambda\geq0$ then $f$ is $\lambda$-Lipschitz.
\end{lemma}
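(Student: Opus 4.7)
The statement is essentially a ``gluing'' property for Lipschitz maps along a common endpoint, so the plan is very direct: reduce the problem to checking the Lipschitz inequality for an arbitrary pair of points $s,t\in[a,c]$, split into cases according to where $s$ and $t$ lie relative to $b$, and handle the only nontrivial case via the triangle inequality with $b$ as an intermediate point.

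More concretely, I would fix $s,t\in[a,c]$ with $s\leq t$ and consider three cases. If $t\leq b$, then both points lie in $[a,b]$ and the conclusion $\|f(s)-f(t)\|\leq\lambda(t-s)$ follows from the hypothesis on $\restr{f}{[a,b]}$. If $s\geq b$, then both points lie in $[b,c]$ and we are done analogously. The only remaining case is $s<b<t$, where $s\in[a,b]$ and $t\in[b,c]$; here I would estimate
\[
\|f(s)-f(t)\|\leq \|f(s)-f(b)\|+\|f(b)-f(t)\|\leq \lambda(b-s)+\lambda(t-b)=\lambda(t-s),
\]
using the hypothesis on each of the two pieces. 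Since $s,t$ were arbitrary, this gives $\|f\|_{\mathrm{Lip}}\leq\lambda$.

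There is no real obstacle: the argument is just the triangle inequality together with the additivity of distances along the real line, and it works the same way as the classical proof that a continuous function which is Lipschitz on each of two overlapping pieces covering its domain is Lipschitz on the union (when the pieces meet at a single point, as here). The only thing to be slightly careful about is the degenerate situation where $s=b$ or $t=b$, which is already subsumed by the first two cases.
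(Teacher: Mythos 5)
Your proof is correct and takes essentially the same route as the paper: the paper's proof immediately reduces to the case $t\in[a,b]$, $s\in[b,c]$ (treating the two one-sided cases as trivial) and then applies the triangle inequality through $b$ exactly as you do. Your version merely spells out the two easy cases that the paper dismisses with ``we may assume''.
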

\begin{proof}
    Let $t,s\in[a,c]$ distinct. We may assume that $t\in[a,b]$ and $s\in[b,c]$. Then,
    $$\|f(t)-f(s)\|\leq\|f(t)-f(b)\|+\|f(b)-f(s)\|\leq\lambda(|t-b|+|b-s|)=\lambda|t-s|.$$\end{proof}

\begin{lemma}\label{lemma:segments}
Let $X$ be a Banach space, let $0<r<R$ and consider a function $f:X\longrightarrow X$. Assume that there are $a,b,c>0$ such that $\sup\limits_{x,y\in B(0,r),x\neq y} \frac{\Vert f(x)-f(y)\Vert}{\Vert x-y\Vert}\leq a$, $\sup\limits_{x,y\in C(0,r,R),x\neq y} \frac{\Vert f(x)-f(y)\Vert}{\Vert x-y\Vert}\leq b$ and \break $\sup\limits_{x,y\in X\setminus B(0,R),x\neq y} \frac{\Vert f(x)-f(y)\Vert}{\Vert x-y\Vert}\leq c$. Then $f$ is Lipschitz and $\Vert f\Vert\leq \max\{a,b,c\}$.
\end{lemma}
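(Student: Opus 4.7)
The plan is to reduce the global bound to Lipschitz estimates on line segments via Lemma \ref{l1}, and then on each segment to slice the parameter interval into finitely many closed subintervals on which the argument of $f$ stays in one of the three hypothesised regions; the pieces are then glued together by iterating Lemma \ref{l2}.

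Concretely, fix distinct $x,y\in X$ and parameterise the segment joining them by $\gamma(t)=(1-t)x+ty$, $t\in[0,1]$, so that $\|\gamma(t)-\gamma(s)\|=|t-s|\,\|x-y\|$. The auxiliary function $g(t):=\|\gamma(t)\|$ is continuous and convex, hence its sublevel sets $\{g\le r\}$ and $\{g\le R\}$ are closed subintervals $[\alpha,\beta]\subseteq[\alpha',\beta']$ of $[0,1]$ (any of them possibly empty). This yields a partition of $[0,1]$ into at most five closed pieces
\[
[0,\alpha'],\ [\alpha',\alpha],\ [\alpha,\beta],\ [\beta,\beta'],\ [\beta',1],
\]
on which respectively $g\ge R$, $r\le g\le R$, $g\le r$, $r\le g\le R$, $g\ge R$. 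Interpreting $B(0,r)$, $C(0,r,R)$ and $X\setminus B(0,R)$ so that together they cover $X$, on each piece $\gamma$ takes values entirely inside one of the three regions supplied by the hypothesis.

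Consequently, the corresponding bound $a\|x-y\|$, $b\|x-y\|$ or $c\|x-y\|$ for $f\circ\gamma$ holds on each piece, and in any case is at most $\lambda\|x-y\|$, where $\lambda:=\max\{a,b,c\}$. Iterating Lemma \ref{l2} across the at most four breakpoints $\alpha',\alpha,\beta,\beta'$ shows that $f\circ\gamma$ is $\lambda\|x-y\|$-Lipschitz on the whole of $[0,1]$; evaluating at $t=0$ and $t=1$ yields $\|f(x)-f(y)\|\le\lambda\|x-y\|$. A final appeal to Lemma \ref{l1} upgrades this segmentwise bound to the global estimate $\|f\|\le\lambda$.

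The only mildly delicate point, and what I would expect to be the main obstacle if one is not careful, is the treatment of boundary parameters where $g(t)\in\{r,R\}$: this is a question of convention on whether the three regions contain the two spheres $\{\|\cdot\|=r\}$ and $\{\|\cdot\|=R\}$. Under the natural interpretation making their union equal to $X$, each such boundary parameter belongs to two adjacent regions, so either of the corresponding Lipschitz estimates applies to it, and the five closed subintervals above are correctly controlled.
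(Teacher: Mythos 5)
Your proof is correct and is precisely the argument the paper intends: its entire proof of Lemma \ref{lemma:segments} is the one-line remark that ``a combination of Lemma \ref{l1} and Lemma \ref{l2} does the trick,'' which is exactly your reduction to segments followed by gluing across the at most four breakpoints produced by the convexity of $t\mapsto\Vert(1-t)x+ty\Vert$. Your closing observation on the boundary convention for $B(0,r)$, $C(0,r,R)$ and $X\setminus B(0,R)$ correctly settles the only detail the paper leaves implicit.
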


\begin{proof}
A combination of Lemma \ref{l1} and Lemma \ref{l2} does the trick.\end{proof}
% Set $M:=\max\{a,b,c\}$. Let us prove that $f$ is Lipschitz. In order to do so, take $x,y\in X$ with $x\neq y$, and let us prove that $\Vert f(x)-f(y)\Vert\leq M \Vert x-y\Vert$. If both $x$ and $y$ belong simultaneously to either $B(0,r)$ or $C(0,r,R)$ or $X\setminus B(0,R)$ then the conclussion follows from the assumptions. Because of that we will assume a different situation. 

% Let us prove, for instance, the case when $x\in B(0,r)$ and $y\in C(0,r,R)$, the other cases will run similarly. Consider the vector segment $[x,y]\subseteq X$. An easy continuity argument with the function $t\longmapsto \Vert tx+(1-t)(y)\Vert, t\in [0,1]$ implies the existence of $z\in [x,y]$ such that $\Vert z\Vert=r$ (in other words $z\in B(0,r)\cap C(0,r,R)$. Since $z\in B(0,r)$ we get $\Vert f(x)-f(z)\Vert\leq a\Vert x-z\Vert$, whereas the condition $y\in C(0,r,R)$ implies $\Vert f(z)-f(y)\Vert\leq b\Vert z-y\Vert$. Now
% \[
% \begin{split}\Vert f(x)-f(y)\Vert\leq \Vert f(x)-f(z)\Vert+\Vert f(z)-f(y)\Vert& \leq a\Vert x-z\Vert+b\Vert z-y\Vert\\
% & \leq M(\Vert x-z\Vert+\Vert z-y\Vert).
% \end{split}\]
% Since $z\in [x,y]$ we infer that $\Vert x-y\Vert=\Vert x-z\Vert+\Vert z-y\Vert$. Consequently we derive that $\Vert f(x)-f(y)\Vert\leq M\Vert x-y\Vert$, as desired.

% The cases when $x\in C(0,r,R), y\in X\setminus B(x,R)$ or $x\in B(0,r), y\in X\setminus B(x,R)$ run similarly and finish the proof.

In the following result we will construct a Lipschitz function on a Banach space which coincides with the identity map out of a ball centred at the origin and that it is flat at a smaller ball contained. This function, which to our knownledge appeared in the thesis of Filip Talimdjiosk, was provided to the authors by his thesis advisor R. Smith. The authors are deeply grateful to R. Smith for providing them the following proposition and its proof.

\begin{proposition}\label{loca0eident}
Let $X$ be a Banach space and let $0<a<b$. Then the function $f:X\longrightarrow X$ defined by
$$f(x):=\left\{\begin{array}{cc}
   0  &  \mbox{if }\Vert x\Vert\leq a,\\
   \frac{b}{b-a}\left(1-\frac{a}{\Vert x\Vert} \right) x  & 
\mbox{if }a\leq \|x\|\leq b,\\
x & \mbox{if }b\leq \|x\|,
\end{array} \right.$$
is Lipschitz and $\Vert f\Vert\leq \frac{b}{b-a}$.

In particular, it follows that for every $x_0\in X$ and every pair of positive numbers $R, \varepsilon$ there exist $\delta>0$ and a Lipschitz-mapping $\psi:X\longrightarrow X$ such that $\psi(x)=x$ holds for every $x\in X\setminus B(x_0,R)$, $\Vert \psi\Vert\leq 1+\varepsilon$ and $\psi(z)=x_0$ holds for every $z\in B(x_0,\delta)$. 
\end{proposition}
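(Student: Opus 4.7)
The plan is to invoke Lemma \ref{lemma:segments} with inner radius $a$ and outer radius $b$, which reduces the Lipschitz estimate for $f$ to separate bounds on the three regions $\overline{B}(0,a)$, the annulus $C(0,a,b)=\{x\in X:a\leq\|x\|\leq b\}$, and $X\setminus B(0,b)$. On $\overline{B}(0,a)$ the function $f$ vanishes identically, and on $X\setminus B(0,b)$ it coincides with the identity, giving Lipschitz constants $0$ and $1$ respectively, both bounded by $\frac{b}{b-a}$ since $b\geq b-a$. The whole task therefore reduces to bounding the Lipschitz ratio of $f$ on the annulus by $\frac{b}{b-a}$.

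On the annulus, factoring out the constant $\frac{b}{b-a}$ in the formula for $f$, this boils down to proving
\[
\Big\|(x-y)-a\Big(\frac{x}{\|x\|}-\frac{y}{\|y\|}\Big)\Big\|\leq \|x-y\|
\]
for every $x,y\in X$ with $\|x\|,\|y\|\geq a$, and this is the heart of the proof. I would derive the elementary identity
\[
\frac{x}{\|x\|}-\frac{y}{\|y\|}=\frac{x-y}{\|x\|}+\frac{y}{\|y\|}\cdot\frac{\|y\|-\|x\|}{\|x\|}
\]
by adding and subtracting $\frac{y}{\|x\|}$, and use it to rewrite the vector inside the norm as $(x-y)\bigl(1-\frac{a}{\|x\|}\bigr)-\frac{a\,y}{\|y\|}\cdot\frac{\|y\|-\|x\|}{\|x\|}$. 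Since $\|x\|\geq a$ forces $1-\frac{a}{\|x\|}\geq 0$, since $\frac{y}{\|y\|}$ is a unit vector, and since the reverse triangle inequality yields $\bigl|\|y\|-\|x\|\bigr|\leq\|x-y\|$, the triangle inequality produces the clean cancellation
\[
\Big(1-\frac{a}{\|x\|}\Big)\|x-y\|+\frac{a}{\|x\|}\|x-y\|=\|x-y\|.
\]
This asymmetric-looking cancellation, valid in any Banach space and requiring no smoothness or differentiability of the norm, is the step I expect to be the main obstacle; everything else is bookkeeping through Lemma \ref{lemma:segments}.

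For the ``in particular'' clause I would proceed by translation and rescaling. Given $x_0\in X$, $R>0$ and $\varepsilon>0$, I pick $\delta>0$ so small that $\frac{R}{R-\delta}\leq 1+\varepsilon$, apply the first part with $a:=\delta$ and $b:=R$ to obtain the corresponding $f$, and define $\psi(x):=x_0+f(x-x_0)$. Then $\psi$ inherits the Lipschitz constant of $f$, which is at most $1+\varepsilon$; for $z\in B(x_0,\delta)$ one has $\|z-x_0\|\leq\delta$, so $f(z-x_0)=0$ and $\psi(z)=x_0$; and for $x\in X\setminus B(x_0,R)$ one has $\|x-x_0\|\geq R$, so $f(x-x_0)=x-x_0$ and $\psi(x)=x$, as required.
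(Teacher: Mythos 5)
Your proposal is correct and follows essentially the same route as the paper: the reduction to the annulus via Lemma \ref{lemma:segments}, the key estimate obtained by adding and subtracting $\frac{a}{\Vert x\Vert}y$ (your identity for $\frac{x}{\Vert x\Vert}-\frac{y}{\Vert y\Vert}$ is just the paper's add-and-subtract step repackaged, leading to the same decomposition $(x-y)\bigl(1-\frac{a}{\Vert x\Vert}\bigr)$ plus a term controlled by the reverse triangle inequality), and the translation $\psi(x)=x_0+f(x-x_0)$ with $a=\delta$, $b=R$ and $\frac{R}{R-\delta}\leq 1+\varepsilon$ for the second part. Your observation that the estimate needs no WLOG ordering of $\Vert x\Vert,\Vert y\Vert$ (only $\Vert x\Vert\geq a$) is a minor tidying of the paper's computation, which assumes $\Vert x\Vert\leq\Vert y\Vert$ without actually needing it.
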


\begin{proof}
Let $x,y\in X$ with $x\neq y$. In virtue of Lemma \ref{lemma:segments}, and since $f$ is clearly Lipschitz at $B(a,r)$ and on $X\setminus B(0,b)$, let us assume that $x,y\in C(0,a,b)$. We assume with no loss of generality that $\Vert x\Vert\leq \Vert y\Vert$. Now we compute $\Vert f(x)-f(y)\Vert$.
\[\begin{split}
\Vert f(x)-f(y)\Vert& = \frac{b}{b-a}\left\Vert y-x+\frac{a}{\Vert x\Vert}x-\frac{a}{\Vert y\Vert}y\right\Vert\\
& =\frac{b}{b-a}\left\Vert  y-x+\frac{a}{\Vert x\Vert}x-\frac{a}{\Vert y\Vert}y+\frac{a}{\Vert x\Vert}y-\frac{a}{\Vert x\Vert}y \right\Vert\\
& \leq \frac{b}{b-a}\left(\left(1-\frac{a}{\Vert x\Vert}\right)\Vert x-y\Vert+\left\vert \frac{a}{\Vert x\Vert}-\frac{a}{\Vert y\Vert}\right\vert \Vert y\Vert \right)\\
& = \frac{b}{b-a}\left(\left(1-\frac{a}{\Vert x\Vert}\right)\Vert x-y\Vert+\frac{a\vert \Vert x\Vert-\Vert y\Vert\vert}{\Vert x\Vert \Vert y\Vert} \Vert y\Vert \right)\\
& \leq \frac{b}{b-a}\left(\left(1-\frac{a}{\Vert x\Vert}\right)\Vert x-y\Vert+\frac{a}{\Vert x\Vert}\Vert x-y\Vert \right)\\
& =\frac{b}{b-a}\Vert x-y\Vert,
\end{split}\]
as desired.

For the second part of the theorem, given $x_0\in X$ and $R,\varepsilon>0$, take $\delta>0$ such that $\frac{R}{R-\delta}<1+\varepsilon$, consider the function above $\varphi:X\longrightarrow X$, then the function $\psi(z):=x_0+\varphi(x-x_0)$ does the trick.\end{proof}

In the following result we will construct a function whose behaviour is the opposite, i.e., a function which is the identity in a neighbourhood of $0$ and that is constantly $0$ out of a certain ball centred at $0$.

\begin{proposition}\label{prop:idenlocalycero}
Let $X$ be a Banach space and $0<a<b$. The function $f:X\longrightarrow X$ defined by
$$f(x):=\left\{\begin{array}{cc}
    x &  \mbox{if }x\in B(0,a),\\
    \frac{b-\Vert x\Vert}{b-a}x & \mbox{if }x\in C(0,a,b),\\
    0 & \mbox{if } x\in X\setminus B(0,R),
\end{array} \right.$$
is Lipschitz and $\Vert f\Vert\leq \frac{b}{b-a}$.
\end{proposition}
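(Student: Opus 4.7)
The plan is to mimic the structure of the proof of Proposition \ref{loca0eident} essentially verbatim, invoking Lemma \ref{lemma:segments} to reduce the verification of the global Lipschitz bound $\frac{b}{b-a}$ to three local checks: on $B(0,a)$, on the corona $C(0,a,b)$, and on $X\setminus B(0,b)$ (I will interpret the $R$ in the statement as $b$, which is evidently intended). On $B(0,a)$ the map is the identity, so it is $1$-Lipschitz, and $1\leq \frac{b}{b-a}$. On $X\setminus B(0,b)$ the map is identically zero, so it is $0$-Lipschitz. Only the corona estimate requires real work, and it is entirely analogous to the key computation in the proof of Proposition \ref{loca0eident}.

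On the corona, fix $x,y\in C(0,a,b)$ with $x\neq y$, and assume without loss of generality that $\|x\|\leq \|y\|$. The plan is to rewrite
\[
(b-a)(f(x)-f(y)) = (b-\|x\|)x - (b-\|y\|)y
\]
by adding and subtracting $(b-\|y\|)x$, obtaining
\[
(b-\|x\|)x - (b-\|y\|)y = (\|y\|-\|x\|)\, x + (b-\|y\|)(x-y).
\]
Taking norms, using the reverse triangle inequality $\|y\|-\|x\|\leq \|x-y\|$ (both sides are nonnegative by our assumption), and then using $\|x\|\leq \|y\|$, I get
\[
\|f(x)-f(y)\| \leq \frac{1}{b-a}\bigl(\|x\|+b-\|y\|\bigr)\|x-y\| \leq \frac{b}{b-a}\,\|x-y\|,
\]
which is the desired bound.

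Combining the three local bounds through Lemma \ref{lemma:segments} gives $\|f\|_{\mathrm{Lip}}\leq \max\{1,\tfrac{b}{b-a},0\} = \tfrac{b}{b-a}$, as required. The only mildly delicate point is choosing the right way to split $(b-\|x\|)x - (b-\|y\|)y$: the alternative decomposition (adding and subtracting $(b-\|x\|)y$) yields the coefficient $b-\|x\|+\|y\|$, which can exceed $b$, so one must factor out $(b-\|y\|)$ rather than $(b-\|x\|)$ to exploit the sign assumption $\|x\|\leq \|y\|$. Apart from this small algebraic care, the argument presents no real obstacle.
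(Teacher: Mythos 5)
Your proof is correct and follows essentially the same route as the paper's: the same reduction via Lemma \ref{lemma:segments} to the corona $C(0,a,b)$, and the same decomposition $(b-\Vert x\Vert)x-(b-\Vert y\Vert)y=(\Vert y\Vert-\Vert x\Vert)x+(b-\Vert y\Vert)(x-y)$ (the paper merely writes the two elementary bounding steps in the opposite order, replacing $b-\Vert y\Vert$ by $b-\Vert x\Vert$ rather than $\Vert x\Vert$ by $\Vert y\Vert$). Your reading of the statement's $R$ as the typo it is (it should be $b$) is also the intended one.
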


\begin{proof}
Again by an application of Lemma \ref{lemma:segments} and because $f$ is clearly Lipschitz on $B(0,a)$ and on $X\setminus B(0,b)$, it is enough to prove that, given $x,y\in C(0,a,b)$ with $x\neq y$, we get $\Vert f(x)-f(y)\Vert\leq \frac{b}{b-a}\Vert x-y\Vert$. Take such $x,y\in C(0,a,b)$ and assume with no loss of generality that $\Vert x\Vert\leq \Vert y\Vert$. Then
\[
\begin{split}
\Vert f(x)-f(y)\Vert& =\frac{1}{b-a}\left\Vert (b-\Vert x\Vert) x-(b-\Vert y\Vert) y \right\Vert\\
& =\frac{1}{b-a}\left\Vert(b-\Vert y\Vert)x+(\Vert y\Vert-\Vert x\Vert)x-(b-\Vert y\Vert)y \right\Vert\\
& \leq\frac{1}{b-a}((b-\Vert y\Vert)\Vert x-y\Vert +\vert \Vert x\Vert-\Vert y\Vert\vert \Vert x\Vert )\\
& \leq \frac{1}{b-a}((b-\Vert x\Vert)\Vert x-y\Vert+\Vert x-y\Vert \Vert x\Vert)\\
& =\frac{b}{b-a}\Vert x-y\Vert.
\end{split}
\]
\end{proof}

The following lemma shows that the set of Lipschitz functions which is injective on a given sequence is norm-dense. We establish and prove the result because we will use it several times throughout the text.

\begin{lemma}\label{lemma:separainfisequ}
Let $M$ be a metric space and $X$ be a Banach space. Consider a sequence of pairwise-disjoint balls $\{B(x_n,r_n)\}$ in $M$. Then for every Lipschitz function $F:M\longrightarrow X$ and $\varepsilon>0$ there exists a Lipschitz function $G:M\longrightarrow X$ with the following properties:
\begin{enumerate}
    \item $\Vert F-G\Vert<\varepsilon$ and,
    \item $G(x_n)\neq G(x_m)$ holds for every $n\neq m$.
\end{enumerate}
\end{lemma}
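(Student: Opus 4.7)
The plan is to add small ``tent-function'' perturbations supported on the given pairwise disjoint balls. I would first introduce the bump functions
\[
\varphi_n(x):=\max\left\{0,\,1-\frac{d(x,x_n)}{r_n}\right\},
\]
which are $(1/r_n)$-Lipschitz, satisfy $\varphi_n(x_n)=1$, and vanish outside $B(x_n,r_n)$. Then I would choose vectors $v_n\in X$ with $\|v_n\|\le \varepsilon r_n/4$ (selected inductively; see below) and set
\[
G(x):=F(x)+\sum_{n}\varphi_n(x)v_n.
\]
Since the balls $B(x_n,r_n)$ are pairwise disjoint, at most one summand is nonzero at any given point of $M$, so $G$ is well defined and $G(x_n)=F(x_n)+v_n$ for every $n$.

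The first main step is to bound the Lipschitz norm of $H:=G-F=\sum_n \varphi_n v_n$. I would carry out a case analysis on a pair $x,y\in M$. If both points lie in the same ball $B(x_n,r_n)$, the bound $\|H(x)-H(y)\|\le (\|v_n\|/r_n)\,d(x,y)$ is immediate from the Lipschitz constant of $\varphi_n$. If $x\in B(x_n,r_n)$ while $y\notin B(x_n,r_n)$ (a situation that includes both $y$ sitting outside every ball and $y$ sitting in some other $B(x_m,r_m)$, the latter ensured by pairwise disjointness), then $d(y,x_n)\ge r_n$, hence $d(x,y)\ge r_n-d(x,x_n)$, and consequently
\[
\varphi_n(x)\|v_n\|\le \frac{\|v_n\|}{r_n}\,d(x,y).
\]
Applying the symmetric estimate at $y$ whenever it falls inside some ball $B(x_m,r_m)$ and summing, one concludes that $\|H\|_{\mathrm{Lip}}\le 2\sup_n \|v_n\|/r_n\le \varepsilon/2<\varepsilon$.

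The second step is the inductive choice of the $v_n$'s. At stage $n$, with $v_1,\dots,v_{n-1}$ already fixed, I only need to pick $v_n\in B_X(0,\varepsilon r_n/4)$ avoiding the finite set $\{v_m+F(x_m)-F(x_n):m<n\}$. This is possible because any nontrivial ball in a Banach space $X\neq\{0\}$ contains uncountably many points (the case $X=\{0\}$ is degenerate and irrelevant for the intended applications). This selection guarantees $F(x_n)+v_n\neq F(x_m)+v_m$ for all $m\neq n$, which is exactly condition (2).

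The only step demanding genuine care is the Lipschitz estimate when $x$ and $y$ lie in different balls; once pairwise disjointness is exploited to obtain $r_n-d(x,x_n)\le d(x,y)$, the rest reduces to routine bookkeeping, and the inductive selection of the perturbation vectors is entirely standard.
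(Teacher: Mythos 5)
Your proof is correct and follows essentially the same route as the paper: both perturb $F$ by bump functions supported on the pairwise disjoint balls, with the values at the centres $x_n$ chosen inductively to avoid the finite set of previously fixed values (and both implicitly need $X\neq\{0\}$, which you rightly flag). The differences are only technical: the paper uses McShane-extended bumps in a single fixed direction $x\in S_X$ with summable coefficients $\sum_n \varepsilon_n<\varepsilon$, so that the plain triangle inequality bounds $\Vert F-G\Vert$, whereas you use explicit tent functions and exploit the disjointness of the supports to obtain the uniform estimate $\Vert H\Vert_{\mathrm{Lip}}\leq 2\sup_n \Vert v_n\Vert/r_n\leq\varepsilon/2$, which is a valid (indeed slightly sharper) alternative to summability.
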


\begin{proof}
Given $n\in\mathbb N$ we can take, by McShane extension theorem, a Lispchitz function $\varphi_n:M\longrightarrow \mathbb R$ such that $\varphi_n(x_n)\neq 0$ for every $n\in\mathbb N$, $\Vert \varphi_n\Vert=1$ and $\varphi_n=0$ on $M\setminus B(x_n,r_n)$. Let $F$ and $\varepsilon$ as in the hypothesis, and consider a sequence $\{\varepsilon_n\}$ of strictly positive numbers such that $\sum_{n=1}^\infty \varepsilon_n<\varepsilon$. Take also $x\in S_X$. 

We will construct by induction a sequence $\{\delta_n\}$ of positive numbers such that, for every $n\in\mathbb N$, the following conditions hold:
\begin{enumerate}
    \item $\delta_n\leq \varepsilon_n$ holds for every $n\in\mathbb N$ and;
    \item $F(x_i)+\delta_i \varphi(x_i)x\neq F(x_n)+\delta_n \varphi(x_n)x$ holds for $1\leq i\leq n-1$.
\end{enumerate}
For $n=1$ take $\delta_1=\varepsilon_1$. Now assume $\delta_1\ldots, \delta_n$ have been constructed and let us construct $\delta_{n+1}$. In order to do so, observe that the set
$$\{F(x_{n+1})+\delta \varphi_{n+1}(x_{n+1})x: 0<\delta<\varepsilon_{n+1}\}$$
is an infinite set since $x$ is a non-zero vector. Since the set
$$\{F(x_i)+\delta_i \varphi_i(x_i)x: 1\leq i\leq n\}$$
is a finite set we can find $0<\delta_{n+1}<\varepsilon_{n+1}$ such that 
$$F(x_{n+1})+\delta \varphi_{n+1}(x_{n+1})x\notin \{F(x_i)+\delta_i \varphi_i(x_i)x: 1\leq i\leq n\}.$$
Now $G=F+\sum_{n=1}^\infty \delta_n \varphi_n\otimes x$ satisfies our requirements. To begin with, the inequality $\Vert F-G\Vert\leq \sum_{n=1}^\infty \delta_n \Vert \varphi_n\Vert \Vert x\Vert\leq \sum_{n=1}^\infty \varepsilon_n<\varepsilon$ holds. On the other hand, the construction of $\delta_n$ together with the fact that $\supp(\varphi_n)\subseteq B(x_n,r_n)$ implies that $G(x_n)=F(x_n)+\delta_n \varphi_n(x_n)x$, from where the proof follows.\end{proof}

Now we introduce the following result, whose proof is easy, and which will save us a lot of notation in the following. The proof is straightforward, but let us include it for the sake of completeness.

\begin{lemma}\label{lemma:compuortolip}
Let $M$ be a complete metric space and $X$ be a Banach space. Let $f,g:M\longrightarrow X$ be two  Lipschitz functions. Assume that there exists $m\in M$ and $0<\delta<R$ so that
\begin{enumerate}
\item $g$ is constant on $B(m,R)$.
\item $f(x)=f(m)$ holds for every $x\in M\setminus B(m,\delta)$.
\end{enumerate}
Then $\Vert f+g\Vert\leq \max\{\Vert f\Vert, \Vert g\Vert\} (1+\frac{2\delta}{R-\delta}).$
\end{lemma}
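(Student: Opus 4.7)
The plan is to bound $\|(f+g)(x) - (f+g)(y)\|$ by case analysis on where $x$ and $y$ sit relative to $B(m,\delta)$ and $B(m,R)$. Write $L := \max\{\|f\|, \|g\|\}$ for brevity, and fix distinct $x, y \in M$. First I would dispose of the easy cases: if both $x, y \in B(m,R)$ then $g(x) = g(y)$, so $\|(f+g)(x) - (f+g)(y)\| = \|f(x) - f(y)\| \leq L\, d(x,y)$; symmetrically, if both $x, y \in M\setminus B(m,\delta)$ then $f(x) = f(y) = f(m)$, which yields the same bound via $g$. Since $B(m,\delta) \subseteq B(m,R)$, the case split is exhaustive once we handle the ``cross'' case where (up to relabelling) $x \in B(m,\delta)$ and $y \in M\setminus B(m,R)$.

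In this remaining case, $g(x) = g(m)$ and $f(y) = f(m)$, so inserting $f(m) + g(m)$ and applying the triangle inequality give
\begin{equation*}
\|(f+g)(x) - (f+g)(y)\| \leq \|f(x) - f(m)\| + \|g(m) - g(y)\| \leq L\bigl(d(x,m) + d(m,y)\bigr).
\end{equation*}
The core estimate is then that $d(x,m) + d(m,y) \leq \bigl(1 + \tfrac{2\delta}{R-\delta}\bigr) d(x,y)$. For this I would apply the triangle inequality in both directions: the upper bound $d(m,y) \leq d(x,y) + d(x,m) \leq d(x,y) + \delta$ gives the additive estimate $d(x,m) + d(m,y) \leq d(x,y) + 2\delta$, while the reverse inequality $d(x,y) \geq d(m,y) - d(x,m) \geq R - \delta$ lets one absorb the $2\delta$ into a multiplicative factor, since $2\delta \leq \tfrac{2\delta}{R-\delta}\, d(x,y)$.

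I do not anticipate any serious obstacle; the whole argument is just a careful triangle inequality combined with the exhaustive case split. The only conceptual point is to notice that the two constancy hypotheses cover all pairs $(x,y)$ except the cross case, and there the separation $d(x,y) \geq R - \delta$ is exactly what is needed to absorb the $2\delta$ slack into the final multiplicative constant $1 + \tfrac{2\delta}{R-\delta}$.
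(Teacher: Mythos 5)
Your proposal is correct and takes essentially the same route as the paper: the paper also reduces to the single ``cross'' case (phrased there as ``if $f(x)-f(y)=0$ or $g(x)-g(y)=0$ the bound is trivial,'' which forces, up to relabelling, $x\in B(m,\delta)$ and $y\notin B(m,R)$) and then uses exactly your two triangle inequalities, $d(x,m)+d(m,y)\leq d(x,y)+2\delta$ and $d(x,y)\geq R-\delta$, to absorb the additive slack into the factor $1+\frac{2\delta}{R-\delta}$. Your exhaustiveness check of the case split is sound, so there is nothing to fix.
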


\begin{proof} Call $M:=\max\{\Vert f\Vert, \Vert g\Vert\}$. Let $x, y\in M$ with $x\neq y$. Let us estimate
$$A:=\frac{\Vert (f(x)+g(x))-(f(y)+g(y))\Vert}{d(x,y)}=\frac{\Vert f(x)-f(y)+g(x)-g(y)\Vert}{d(x,y)}.$$
Let us observe that if $f(x)-f(y)=0$ or $g(x)-g(y)=0$ then $A\leq M$.

The unique possibility for the previous condition not to hold is that, up to relabeling, $x\notin B(m,R)$ and $y\in B(m,\delta)$. In that case $f(x)=f(m)$ and $g(y)=g(m)$. Consequently
$$A\leq \frac{\Vert f(y)-f(m)\Vert+\Vert g(x)-g(m)\Vert}{d(x,y)}\leq M \frac{d(y,m)+d(x,m)}{d(x,y)}.$$
Since $d(x,m)\leq d(x,y)+d(y,m)$ the above inequality yields
$$A\leq M \frac{d(x,y)+2d(y,m)}{d(x,y)}=M\left(1+\frac{2d(y,m)}{d(x,y)}\right).$$
Now using $d(x,y)\geq d(x,m)-d(y,m)\geq R-\delta$ we get
$$A\leq M\left(1+\frac{2\delta}{R-\delta}\right),$$
as desired.
\end{proof}

Let us end by recalling the following criterion of weakly null sequences on $\Lip(M,X)$ from \cite{ccgmr19}, which will be used several times throughout the text and whose proof can be found in \cite[Lemma 1.5]{ccgmr19}.

\begin{lemma}\label{lemmaweaknull}
Let $M$ be a pointed metric space, let $X$ be a Banach space, and let $\{f_n\}$ be a sequence of functions in the unit ball of $\Lip(M,X)$. For each $n\in \mathbb N$, we write $U_n:=\{x\in M \colon f_n(x)\neq 0\}$ for the support of $f_n$. If $U_n\cap U_m = \emptyset$ for every $n\neq m$, then the sequence $\{f_n\}$ is weakly null.
\end{lemma}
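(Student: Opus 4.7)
The plan is to verify directly that $\phi(f_n) \to 0$ for every $\phi \in \Lip(M,X)^{*}$, using a contradiction argument that exploits the disjointness of supports to bound the Lipschitz norm of arbitrary signed finite sums of the $f_n$'s.

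First I would establish the key estimate: for any finite subset $F \subset \mathbb{N}$ and any choice of signs $\epsilon_n \in \{-1,+1\}$ for $n \in F$, the function $g := \sum_{n \in F} \epsilon_n f_n$ satisfies $\|g\|_{\Lip(M,X)} \leq 2$. (Note first that since $f_n(0)=0$, the base point does not lie in any $U_n$.) Let $x,y \in M$ be distinct. If both $x$ and $y$ lie in the same $U_n$, or one of them lies outside $\bigcup_n U_n$, then $g$ reduces on $\{x,y\}$ to a single $\pm f_n$ (the other summands vanishing at both points), so $\|g(x)-g(y)\| \leq d(x,y)$. The remaining case is $x \in U_n$ and $y \in U_m$ with $n \neq m$; by disjointness, $f_m(x)=0=f_n(y)$, and thus
\[
\|g(x)-g(y)\| = \|\epsilon_n f_n(x) - \epsilon_m f_m(y)\| \leq \|f_n(x)-f_n(y)\| + \|f_m(x)-f_m(y)\| \leq 2 d(x,y).
\]
This gives $\|g\|_{\Lip(M,X)} \leq 2$ as claimed.

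Now I would argue by contradiction. Suppose $\{f_n\}$ is not weakly null. Then there exist $\phi \in \Lip(M,X)^{*}$, a number $\delta > 0$, and a subsequence $\{f_{n_k}\}$ with $|\phi(f_{n_k})| \geq \delta$ for every $k$. Choose signs $\epsilon_k \in \{-1,+1\}$ so that $\epsilon_k \phi(f_{n_k}) \geq \delta$. Then for every $K \in \mathbb{N}$,
\[
K\delta \leq \phi\!\left(\sum_{k=1}^{K}\epsilon_k f_{n_k}\right) \leq \|\phi\|\cdot\left\|\sum_{k=1}^{K}\epsilon_k f_{n_k}\right\|_{\Lip(M,X)} \leq 2\|\phi\|,
\]
where the last inequality applies the key estimate to the disjointly supported family $\{\epsilon_k f_{n_k}\}$. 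Letting $K \to \infty$ yields the desired contradiction, so $\phi(f_n) \to 0$ for every $\phi$, i.e.\ $\{f_n\}$ is weakly null.

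The argument contains no serious obstacle; the one point that requires a moment's attention is the case analysis in the Lipschitz estimate, and in particular the observation that disjointness of supports forces $f_m(x)=0=f_n(y)$ in the cross-support case, which is precisely what allows the triangle inequality to deliver the uniform bound independent of $|F|$.
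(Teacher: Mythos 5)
Your proof is correct. Note that the paper does not contain its own proof of this lemma: it defers to \cite[Lemma 1.5]{ccgmr19}, and the argument there rests on precisely your key estimate --- pairwise disjointness of the supports yields $\bigl\Vert \sum_{n\in F} a_n f_n \bigr\Vert \leq 2\max_{n\in F}\vert a_n\vert$ for every finite $F\subseteq\mathbb N$ --- which is then packaged as a bounded operator $T\colon c_0\longrightarrow \Lip(M,X)$ with $Te_n=f_n$ and $\Vert T\Vert\leq 2$, weak nullity of $\{f_n\}$ following because the unit vector basis of $c_0$ is weakly null and bounded operators are weak-to-weak continuous. Your sign-choice contradiction (the partial sums give $\phi\bigl(\sum_{k\leq K}\epsilon_k f_{n_k}\bigr)\geq K\delta$ against the uniform bound $2\Vert\phi\Vert$) is an equally valid and slightly more self-contained way to finish, since it avoids invoking $c_0$ altogether; the mathematical content is the same estimate. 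One small point worth tightening in your case analysis: a point $x$ may lie in $U_n$ for some $n\notin F$, a situation not literally covered by ``$x$ outside $\bigcup_n U_n$'' nor by your cross-support identity $\Vert g(x)-g(y)\Vert=\Vert \epsilon_n f_n(x)-\epsilon_m f_m(y)\Vert$ (which presumes $n,m\in F$); it is harmless, because then $g(x)=0=f_m(x)$ for every $m\in F$ and the same triangle-inequality bound $\leq 2\,d(x,y)$ goes through, but this case does actually occur when you apply the estimate to the finite subfamily $\{f_{n_k}\colon k\leq K\}$ of a larger disjointly supported sequence, so it deserves an explicit word.
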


\section{Daugavet property in $\Lip(M,X)$}
 In this section we will make use of the previous results to give the prove of Theorem \ref{theo:mainDauga}.

\begin{proof}[Proof of Theorem \ref{theo:mainDauga}]
Let $f,g\in S_{\Lip(M,X)}$, and, in order to prove that $\Lip(M,X)$ has the Daugavet property, let us prove that for every $\varepsilon>0$ there exists a sequence $(f_n)\subseteq (1+\varepsilon)B_{\Lip(M,X)}$ such that $(f_n)\longrightarrow g$ weakly and $\Vert f+f_n\Vert\geq 2-\varepsilon$ holds for every $n\in\mathbb N$. This is enough thanks to \cite[Theorem 2.1. (5)]{rueda22}. 

Since $\Vert f\Vert_{Lip}=1$ we can find $y^*\in S_{X^*}$ such that $y^*\circ f:M\longrightarrow \mathbb R$ given by $y^*\circ f(m):=y^*(f(m))$ satisfies $\Vert y^*\circ f\Vert_{Lip}>1-\frac{\varepsilon}{2}$.

Since $M$ is length, it is spreadingly local (see the terminology section), that is, the set
$$A=\left\{m\in M\;:\;\inf_{r>0}\|y^*\circ f\restricted{B(m,r)}\|_{Lip}>1-\varepsilon/4\right\}$$
is infinite. Hence we can take a sequence of pairwise distinct points $(x_n)\subseteq A$. Up to an application of Lemma \ref{lemma:separainfisequ} we can assume, up to a norm-perturbation argument, that $g(x_n)\neq g(x_m)$ if $n\neq m$. Hence we can find, for every $n\in\mathbb N$, an element $\alpha_n>0$ such that $\{B(g(x_n),\alpha_n): n\in\mathbb N\}$ is pairwise disjoint (observe that since $\Vert g\Vert\leq 1$ it is clear that $\{B(x_n,\alpha_n)\}$ is also pairwise disjoint).

Consider $0<\beta_n<\alpha_n$ for every $n\in\mathbb N$ such that $\frac{\alpha_n}{\alpha_n-\beta_n}\rightarrow 1$ and consider, in virtue of Proposition \ref{loca0eident}, a Lipschitz function $\varphi_n:X\longrightarrow X$ such that $\Vert \varphi_n\Vert\leq \frac{\alpha_n}{\alpha_n-\beta_n}$ holds for every $n\in\mathbb N$, that $\varphi_n(x)=x$ holds for every $x\in X\setminus B(g(x_n),\alpha_n)$ and $\varphi_n(x)=g(x_n)$ holds for every $x\in B(g(x_n),\beta_n)$. 

Now for every $n\in\mathbb N$ write $h_n:=\varphi_n\circ g: M\longrightarrow X$. It is immediate that $\Vert h_n\Vert\rightarrow 1$ (since $\Vert h_n\Vert\leq \Vert \varphi_n\Vert \Vert g\Vert\leq \frac{\alpha_n}{\alpha_n-\beta_n})$. 

Now we claim that $(h_n-g)$ is a sequence of mappings with  pariwise disjoint support. Indeed, it is immediate that, given $n\in\mathbb N$, it follows that $h_n(x)-g(x)\neq 0$ implies $g(x)\in B(g(x_n),\alpha_n)$ which, in other words, mean that $\supp(h_n-g)\subseteq g^{-1}(B(g(x_n),\alpha_n))$ for every $n\in\mathbb N$. The fact that $\supp(h_n-g)$ is pairwise disjoint is immediate now since $B(g(x_n),\alpha_n)$ is pairwise disjoint. Consequently $h_n-g$ is weakly null in virtue of Lemma \ref{lemmaweaknull} or, equivalently, $(h_n)\rightarrow g$ weakly. 

On the other hand observe that $h_n=\varphi_n\circ g$ takes the value $g(x_n)$ at $B(x_n,\beta_n)$. Indeed, given $z\in B(x_n,\beta_n)$ it follows that $\Vert g(z)-g(x_n)\Vert\leq \Vert g\Vert d(z,x_n)<\beta_n$, which implies $\varphi_n(g(z))=g(x_n)$ by the very definition of $\varphi_n$.

Now consider a sequence $(r_n)$ of strictly positive numbers such that $\frac{r_n}{\beta_n-2r_n}\rightarrow 0$. 

Since $\{x_n:n\in\mathbb N\}\subseteq A$ we can find $y_n\in M$ with $0<d(x_n,y_n)<r_n$ such that 
$$\frac{y^*(f(y_n))-y^*(f(x_n))}{d(y_n,x_n)}>1-\frac{\varepsilon}{4}.$$
Now define a function $s_n:M\longrightarrow \mathbb R$ with $\Vert s_n\Vert\leq 1$, $s_n(z)=0$ if $z\in M\setminus B(x_n,2r_n)$, $s_n(x_n)=0$ and $s_n(y_n):=d(x_n,y_n)$. This function may be constructed as the McShane extension to $M$ of the function $$\widetilde s_n:M\setminus B(x_n,2r_n)\cup\{x_n,y_n\}\to\R$$
defined as it was previously stated which is clearly $1$-Lipschitz.

Now since $\Vert y^*\Vert=1$ find $y\in S_X$ such that $y^*(y)>1-\frac{\varepsilon}{4}$. Consider $S_n:=s_n\otimes y:M\longrightarrow X$. Observe that $\Vert S_n\Vert\leq 1$. Moreover $S_n$ is clearly a sequence of pairwise disjoint Lipschitz functions, since the support of $S_n$ is contained in $B(x_n,r_n)\subseteq B(x_n,\alpha_n)$. Consequently by Lemma \ref{lemmaweaknull} we get $(S_n)\rightarrow 0$ weakly.

Now define $g_n:=h_n+S_n$, and we claim that the sequence $\{g_n\}$ does the trick. On the one hand, the convergence conditions on $h_n$ and $S_n$ imply that $g_n\rightarrow g$ weakly. On the other hand, given $n\in\mathbb N$, an appeal to Lemma \ref{lemma:compuortolip} for $m=x_n$, $R=\beta_n$ and $\delta=2r_n$ implies 
$$\Vert g_n\Vert\leq \frac{\alpha_n}{\alpha_n-\beta_n}(1+\frac{4 r_n}{\beta_n-2r_n})\rightarrow 1.$$
Consequently there exists $m\in\mathbb N$ such that $n\geq m$ implies $g_n\in (1+\varepsilon)B_{\Lip(M,X)}$. Finally, given $n\in\mathbb N$, we have
\[
\begin{split}
\Vert f+g_n\Vert& \geq y^*\left( \frac{(f+g_n)(y_n)-(f+g_n)(x_n)}{d(y_n,x_n)}\right)\\
& =\frac{y^*(f(y_n))-y^*(f(x_n))}{d(y_n,x_n)}+\frac{y^*(S_n)(y_n)-y^*(S_n)(x_n)}{d(y_n,x_n)}\\
& >1-\frac{\varepsilon}{4}+\frac{s_n(y_n)-s_n(x_n)}{d(y_n,x_n)}y^*(y)>1-\frac{\varepsilon}{4}+1-\frac{\varepsilon}{4}>2-\varepsilon.
\end{split}
\]
This finishes the proof.\end{proof}

As a particular case of the above theorem we have the following corollary.

\begin{corollary}\label{cor:Daugavetvectorval}
Let $M$ be a metric space and let $X$ be a non-zero Banach space. If $M$ is length then $(\mathcal F(M)\pten X)^*$ has the Daugavet property. In particular, $\mathcal F(M)\pten X$ has the Daugavet property.
\end{corollary}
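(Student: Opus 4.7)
The plan is to deduce both statements from Theorem \ref{theo:mainDauga} via the standard duality identifications for projective tensor products recalled in the Terminology subsection. Since $X$ is non-zero, so is $X^*$, hence Theorem \ref{theo:mainDauga} applies with codomain $X^*$ and yields that $\Lip(M,X^*)$ has the Daugavet property. The first step is then simply to rewrite this isometrically: via the identification
\[
(\mathcal F(M)\pten X)^* = L(\mathcal F(M), X^*) = \Lip(M,X^*),
\]
which is precisely the identification highlighted at the end of the Terminology subsection, we transfer the Daugavet property from $\Lip(M,X^*)$ to $(\mathcal F(M)\pten X)^*$. This gives the first conclusion.

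For the \emph{in particular} clause, I would invoke the classical and elementary fact, mentioned explicitly in the introduction, that the Daugavet property passes from a dual Banach space to any predual of it (see \cite{kssw01}): if $Z^*$ has the Daugavet property, then so does $Z$, essentially because every slice of $B_Z$ induces a $w^*$-slice of $B_{Z^*}$ and the relevant testing inequality $\Vert x+y\Vert \geq 2-\varepsilon$ is preserved under dualisation. Applied to $Z = \mathcal F(M)\pten X$, this gives that $\mathcal F(M)\pten X$ has the Daugavet property.

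No obstacle is expected here: everything beyond Theorem \ref{theo:mainDauga} is bookkeeping using standard isometric identifications and the well-known predual stability of the Daugavet property. The only point that deserves an explicit sentence is ensuring that the hypothesis of Theorem \ref{theo:mainDauga} is satisfied, which requires $X^*$ to be a (non-zero) Banach space, and this is exactly guaranteed by assuming $X$ is non-zero.
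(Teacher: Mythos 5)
Your proposal is correct and is exactly the paper's (implicit) argument: the paper offers no separate proof beyond ``as a particular case of the above theorem,'' meaning precisely your route --- apply Theorem \ref{theo:mainDauga} with target $X^*$, use the isometric identification $(\mathcal F(M)\pten X)^*=L(\mathcal F(M),X^*)=\Lip(M,X^*)$, and invoke the standard fact that the Daugavet property passes from a dual space to its predual. The only wrinkle, which your write-up shares with the paper's own one-line deduction, is that Theorem \ref{theo:mainDauga} assumes $M$ complete while the corollary does not; this is harmless since $\Lip(M,X^*)=\Lip(\overline{M},X^*)$ isometrically, the completion of a length space is again length, and in any case the proof of the theorem only uses that $M$ is spreadingly local, which (as noted in the terminology section) holds for every length space regardless of completeness.
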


\section{Octahedrality of $\mathcal F(M)\pten X$}\label{section:octafree}

In this section we will focus on the proof of Theorem \ref{octamain}. The proof is splited in two theorems: We first prove in Theorem \ref{theo:librediscrenouniform} the case in which $M$ is not uniformly discrete and we then prove in Theorem \ref{theo:octaunboununidis} the case when $M$ is not bounded.

Given a metric space $M$, for a given point $x\in M$ define $d_x=0$ if $x$ is a cluster point, otherwise define
$$d_x:=\sup\{r>0: B(x,r)=\{x\}\}.$$
Now we have the following result.

\begin{theorem}\label{theo:librediscrenouniform}
Let $M$ be a metric space which is not uniformly discrete and let $X$ be a Banach space. Then the norm of $\mathcal F(M)\pten X$ is octahedral.    
\end{theorem}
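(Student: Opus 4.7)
The plan is to produce, for any list $u_1,\dots,u_n\in S_{\mathcal{F}(M)\pten X}$ and any $\varepsilon>0$, a single molecule tensor $v=m_{a,b}\otimes x_0\in S_{\mathcal{F}(M)\pten X}$, with $m_{p,q}:=(\delta_p-\delta_q)/d(p,q)$, such that $\|u_i-v\|\geq 2-\varepsilon$ for every $i$. Via the duality $(\mathcal{F}(M)\pten X)^*=\Lip(M,X^*)$, this reduces to constructing, for each $i$, an element $h_i\in B_{\Lip(M,X^*)}$ with $\langle u_i,h_i\rangle$ close to $1$ and $\langle v,h_i\rangle$ close to $-1$. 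I begin by approximating each $u_i$ in the projective norm by a finite linear combination of elementary tensors $m_{p,q}\otimes x$, so that the underlying set $P\subset M$ of base points is finite. Since $M$ is not uniformly discrete, I can pick a close pair $(a,b)$ with $\eta:=d(a,b)$ arbitrarily small, arranging $b\notin P$ and $B(b,\eta)\cap(P\setminus\{a\})=\emptyset$; the point $a$ may be forced to lie in $P$, namely when every cluster point of $M$ already belongs to $P$, but this turns out to be harmless. Fix $x_0\in S_X$ and $x_0^*\in S_{X^*}$ with $x_0^*(x_0)=1$, and set $v:=m_{a,b}\otimes x_0$.

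For each $i$, pick $f_i\in B_{\Lip(M,X^*)}$ norming $u_i$ and apply the second part of Lemma \ref{loca0eident}, translated so that its flat centre is $f_i(a)\in X^*$: this produces $\psi_i:X^*\to X^*$ equal to the identity outside $B(f_i(a),R)$, equal to the constant $f_i(a)$ on $B(f_i(a),\delta)$, and $(1+\varepsilon_1)$-Lipschitz. Choosing $R$ smaller than the strictly positive number
\[
m_i:=\min\{\,\|f_i(p)-f_i(a)\|:p\in P,\ f_i(p)\neq f_i(a)\,\},
\]
one forces $\tilde f_i:=\psi_i\circ f_i$ to coincide with $f_i$ at every $p\in P$ (the points $p$ with $f_i(p)=f_i(a)$ are automatically fixed by $\psi_i$, while the rest lie outside the flattening ball). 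Moreover, the $1$-Lipschitz property of $f_i$ makes $\tilde f_i$ constantly equal to $f_i(a)$ on $B(a,\delta)\subset M$, so demanding $\eta<\delta$ yields $\tilde f_i(a)=\tilde f_i(b)=f_i(a)$.

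Next define the perturbation $\Delta:M\to X^*$ by $\Delta(y):=\varphi(y)\,x_0^*$, where $\varphi(y):=\max\{0,\eta-d(y,b)\}$ is the $1$-Lipschitz bump with $\varphi(a)=0$, $\varphi(b)=\eta$ and $\supp\varphi\subseteq B(b,\eta)\subseteq B(a,2\eta)$. Shrinking $\eta$ further so that $2\eta<\delta$, Lemma \ref{lemma:compuortolip} applied with $m=a$, outer radius $\delta$ and inner radius $2\eta$ gives
\[
\|\tilde f_i+\Delta\|_{\Lip}\;\leq\;(1+\varepsilon_1)\left(1+\tfrac{4\eta}{\delta-2\eta}\right)\;\leq\;1+\varepsilon.
\]
Setting $h_i:=(\tilde f_i+\Delta)/(1+\varepsilon)\in B_{\Lip(M,X^*)}$, the vanishing of $\Delta$ on $P$ (since $\varphi(a)=0$ and $\supp\varphi$ is disjoint from $P\setminus\{a\}$) gives $\langle u_i,h_i\rangle\approx\langle u_i,f_i\rangle\geq 1-O(\varepsilon)$, whereas the flatness $\tilde f_i(a)=\tilde f_i(b)$ forces $(h_i(a)-h_i(b))(x_0)=-\eta/(1+\varepsilon)$, so $\langle v,h_i\rangle\approx -1$. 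Combining yields $\|u_i-v\|\geq\langle u_i-v,h_i\rangle\geq 2-O(\varepsilon)$, which is the desired estimate.

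The main obstacle is coordinating the three scales $R,\delta,\eta$ uniformly in $i$ so that the compounding multiplicative Lipschitz factors arising from $\psi_i$ and from Lemma \ref{lemma:compuortolip} stay below $1+\varepsilon$; this forces the chain $\eta\ll\delta<R<\min_i m_i$, the last quantity being a finite minimum of strictly positive numbers and hence positive. The fact that $\psi_i$ always fixes its own centre $f_i(a)$ is precisely what makes the construction robust to $a\in P$, absorbing the otherwise irritating case distinction on whether every cluster point of $M$ happens to lie in $P$.
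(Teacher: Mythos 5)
Your overall scheme is the same as the paper's (flatten the norming functionals with Proposition \ref{loca0eident} around the value at one endpoint of a short molecule, add a disjointly supported bump norming that molecule, and control the sum with Lemma \ref{lemma:compuortolip}), but there is a genuine gap in how you coordinate the scales, and it occurs precisely in the case this theorem is really about. Your chain $\eta\ll\delta<R<\min_i m_i$ is circular: $m_i=m_i(a)$ depends on the point $a$, while $\eta=d(a,b)$ is not a free parameter --- ``shrinking $\eta$ further'' means re-choosing the pair $(a,b)$. That is harmless only if $a$ can be held fixed while $b\to a$, i.e.\ if $a$ is a cluster point of $M$. But a metric space that is not uniformly discrete may have no cluster points at all (e.g.\ $\{k:k\in\mathbb N\}\cup\{k+2^{-k}:k\in\mathbb N\}\subseteq\mathbb R$); then every admissible pair $(a_k,b_k)$ with $\eta_k\to 0$ moves $a_k$, the quantity $\min_i m_i(a_k)$ can tend to $0$ much faster than $\eta_k$ (nothing prevents $f_i(a_k)$ from approaching some $f_i(p)$ without ever equalling it), and your requirement $2\eta_k<\delta_k\lesssim \varepsilon_1\min_i m_i(a_k)$ may fail for every $k$. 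Your closing remark that $\min_i m_i$ is a finite minimum of strictly positive numbers is true for each fixed $a$, but it does not provide the lower bound, uniform along the sequence of pairs, that the chain actually needs. This is exactly the difficulty the paper devotes its middle steps to: it takes a whole sequence $\{x_n\}$ of candidate points with $d_{x_n}\to 0$, runs for each $i$ a dichotomy on the values $f_i(x_n)$ --- norm-convergent subsequence versus uniformly separated values --- and, after a perturbation via Lemma \ref{lemma:separainfisequ}, extracts a single radius $R>0$ valid for all $n$, so that $n$ (equivalently $\eta$) can be chosen at the very end without disturbing $R$.

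The good news is that the gap is repairable inside your own framework, and the repair even avoids the paper's dichotomy. Your observation that $\psi_i$ fixes its own centre, so that points $p$ with $f_i(p)=f_i(a)$ cost nothing, is correct, but it only kills exact coincidences, not near-coincidences, which is where the uniformity problem lives; so instead drop the demand $\tilde f_i=f_i$ on $P$ and settle for approximate agreement. Since $\psi_i$ is the identity off $B(f_i(a),R)$, is $(1+\varepsilon_1)$-Lipschitz and fixes the centre, one gets $\Vert \psi_i\circ f_i-f_i\Vert_\infty\leq (2+\varepsilon_1)R$, whence $\vert \tilde f_i(u_i')-f_i(u_i')\vert\leq C_i R$ with $C_i$ computed from the fixed finite representation of $u_i'$ (coefficients, $\Vert x_j\Vert$, $\min_j d(p_j,q_j)$). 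Choosing $R$, and then $\delta$, from this data alone --- \emph{before} choosing the pair $(a,b)$ --- decouples $R$ from $a$ entirely, and the remainder of your argument goes through as written (your selection of $b\notin P$ with $B(b,\eta)\cap(P\setminus\{a\})=\emptyset$ is fine: take $a$ to be a closest point of $P\cup\{a\}$ to $b$). Two loose ends are cosmetic but worth noting: exact norming functionals $f_i$ need not exist, so take $f_i(u_i')>1-\varepsilon'$; and $\tilde f_i+\Delta$ need not vanish at the base point of $M$, so it must be renormalised by subtracting a constant before being regarded as an element of $\Lip(M,X^*)$, which is harmless since you only ever evaluate it on linear combinations of molecules.
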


\begin{proof}
As before let $\mu_1,\ldots, \mu_q\in \mathcal F(M)\pten X$ and $\varepsilon>0$, and let us find $\mu\in S_{\mathcal F(M)\pten X}$ such that $\Vert \mu_i+\mu\Vert\geq \frac{(1-\varepsilon)^2}{1+\varepsilon}(\Vert \mu_i\Vert+1)$.

By a density argument we can assume that, for every $1\leq i\leq q$, we can write $\mu_i:=\sum_{j=1}^{n_i}\lambda_{ij}\delta_{m_{ij}}\otimes x_{ij}$, for certain $n_i\in\mathbb N$, $\lambda_{ij}\in\mathbb R\setminus\{0\}, m_{ij}\in M\setminus \{0\}$ and $x_{ij}\in X\setminus\{0\}$. 

By Hahn-Banach theorem we can find, for every $i\in\{1,\ldots, q\}$, an element $f_i\in (\mathcal F(M)\pten X)^*=\Lip(M,X^*)$ such that $\Vert f_i\Vert< 1$ and $f_i(\mu_i)>(1-\varepsilon)\Vert \mu_i\Vert$.

Since $M$ is discrete but not uniformly discrete we can find a sequence $\{x_n\}\subseteq M$ such that $d_{x_n}\rightarrow 0$.

Given $1\leq i\leq q$ we have two possibilities, depending on whether or not $f_i(x_{n_k})$ has a Cauchy subsequence:
\begin{enumerate}
\item There exists a subsequence $\{x_{n_k}\}$ of $\{x_n\}$ such that $f_i(x_{n_k})$ is norm convergent.
\item There exists $\delta_i>0$ such that $\Vert f_i(x_{n_k})-f_i(x_{n_p})\Vert\geq \delta_i$ if $k\neq p$.
\end{enumerate}
Thus we will assume that, up to relabeling the sequence $\{x_n\}$, there exists $\delta>0$ with the following property: given $i\in \{1,\ldots, q\}$ then either $f_i(x_n)$ is norm-convergent or $\Vert f_i(x_n)-f_i(x_m)\Vert\geq \delta$ if $n\neq m$. 

The above allows us, in any case of $i$, to assume up to a perturbation argument calling Lemma \ref{lemma:separainfisequ} that there exists some $R>0$ such that $\Vert f_i(x_n)-f_i(m_{ij})\Vert\geq R$ holds for every $n\in\mathbb N$, every $1\leq i\leq q$ and every $1\leq j\leq n_i$.

Take $r>0$ such that $\frac{R}{R-r}\Vert f_i\Vert<1$ holds for every $1\leq i\leq q$ and define $\varphi_n^i: X^*\longrightarrow X^*$ a $\frac{R}{R-r}$-Lipschitz function such that $\varphi_n^i(x^*)=x^*$ holds for every $x^*\in X^*\setminus B(f_i(x_n),R)$ and $\varphi_n^i(z)=f_i(x_n)$ if $z\in B(f_i(x_n),r)$. If we define $g_n^i:=\varphi_n^i\circ f_i:M\longrightarrow X^*$ we get that $\Vert g_n^i\Vert<1$, that $g_n^i(m_{ij})=f_i(m_{ij})$ (in particular $g_n^i(\mu_i)>(1-\varepsilon)\Vert \mu_i\Vert$) and moreover $g_n^i(z)=f_i(x_n)$ for every $z\in B(x_n,r)$.

Now select $\alpha>0$ small enough so that $\frac{2\alpha}{r-\alpha}<\varepsilon$ and find $n$ large enough so that $d_{x_n}<\frac{\alpha}{2}$. Now we consider $y_n\neq x_n$ with $d(x_n,y_n)<\frac{\alpha}{2}$ and define, using the McShane extension theorem and functions of the form $s\otimes v^*$ for $s:M\longrightarrow \mathbb R$, a Lipschitz function $S_n:M\longrightarrow X^*$ so that $S_n=0$ on $M\setminus B(x_n,\alpha)\cup\{x_n\}$, $S_n(y_n)=d(x_n,y_n)v^*$ for some $v^*\in S_{X^*}$ and $\Vert S_n\Vert=1$.

Then, consider $g_i:=g_n^i+S_n$, whose norm is smaller than $1+\frac{2\alpha_n}{r-\alpha_n}<1+\varepsilon$ by applying Lemma \ref{lemma:compuortolip} for $m=x_n$.

% Take some $u,v\in B(x_n,\alpha)$ with $u\neq v$ and $x\in S_X$ such that $\frac{S_n(u)(x)-S_n(v)(x)}{d(u,v)}>(1-\varepsilon)^2$ as in the final part of the proof of Theorem \ref{theo:normaohfreetensorcluster}. 
Finally, taking $v\in S_X$ with $v^*(v)>1-\varepsilon$ and putting $\mu=\frac{\delta_{y_n}-\delta_{x_n}}{d(y_n,x_n)}\otimes v$ we get that
\[
\begin{split}
(1+\varepsilon)\Vert \mu_i+\mu\Vert\geq g_i(\mu_i+\mu)& =g_i(\mu_i)+g_i(\mu)\\&=g_n^i(\mu_i)+S_n(\mu)\\
& >(1-\varepsilon)\Vert \mu_i\Vert+(1-\varepsilon)^2\\
& \geq (1-\varepsilon)^2(\Vert \mu_i\Vert+1),
\end{split}
\]
which implies $\Vert \mu_i+\mu\Vert\geq \frac{(1-\varepsilon)^2}{1+\varepsilon}(\Vert \mu_i\Vert+1)$, and the proof is finished.

% Now similar estimates to that of the above result allows to prove that $g_i(\mu_i+\mu)\geq (1-\varepsilon)^2(\Vert \mu_i\Vert+1)$, so $\Vert \mu_i+\mu\Vert\geq \frac{(1-\varepsilon)^2}{1+\varepsilon}(\Vert \mu_i\Vert+1)$, and the proof is finished.
\end{proof}

Now, we move on to the unbounded case. For that, we first need the following auxiliary result.

\begin{lemma}\label{lemma:densiunidiscre0}
Let $M$ be any metric space and let $X$ be a Banach space. Let $F\subseteq M$ be a finite set and $f:M\longrightarrow X$ be a Lipschitz function with $\Vert f\Vert<1$. Then there exists $g:M\longrightarrow X$ with $\Vert g\Vert< 1$ such that $g=f$ on $F$ and $g=0$ on $M\setminus B(0,R)$, for certain $R>0$.
\end{lemma}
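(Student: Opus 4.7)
My approach would be to perform two successive cut-offs: one in the codomain $X$, using Proposition \ref{prop:idenlocalycero}, to make the function bounded in norm while leaving its values on $F$ untouched; and then one in the domain $M$, via a standard radial cut-off, to force the resulting function to vanish outside some ball of $M$. The finiteness of $F$ enters only to ensure that both cut-offs can be positioned past all of $F$ simultaneously.

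For the first cut-off, I would choose $\delta>0$ with $\|f\|\leq 1-\delta$ and set $r_0:=\max_{x\in F}\|f(x)\|$, which is finite because $F$ is finite. Then I would pick $0<a<b$ with $r_0<a<b\delta$ and invoke Proposition \ref{prop:idenlocalycero} to obtain $\psi:X\to X$ which is the identity on $B(0,a)$, vanishes on $X\setminus B(0,b)$, and satisfies $\|\psi\|\leq b/(b-a)$. The composition $h:=\psi\circ f$ should then agree with $f$ on $F$ (because $f(F)\subseteq B(0,a)$), remain bounded by $b$ in norm throughout $M$ (because $\psi$ takes values in $B(0,b)$), and satisfy $\|h\|\leq(1-\delta)/(1-a/b)<1$, the strict inequality following from $a<b\delta$.

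For the second cut-off, I would fix $R_0$ large enough that $F\subseteq B(0,R_0)$ and, for $R>R_0$ to be determined, take $\eta:M\to[0,1]$ to be the standard $1/(R-R_0)$-Lipschitz radial cut-off that equals $1$ on $B(0,R_0)$ and vanishes outside $B(0,R)$. The product $g:=\eta\cdot h$ would trivially agree with $f$ on $F$ and vanish on $M\setminus B(0,R)$. Splitting $g(x)-g(y)=\eta(x)(h(x)-h(y))+(\eta(x)-\eta(y))h(y)$ and using $\|h\|_\infty\leq b$ together with $0\leq\eta\leq 1$ should give $\|g\|\leq\|h\|+b/(R-R_0)$, so selecting $R$ large enough that $b/(R-R_0)<1-\|h\|$ would deliver $\|g\|<1$, as required.

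The point that I expect to be the main obstacle is precisely the reason a single radial cut-off on $M$ cannot succeed: since $\|f(y)\|$ may grow linearly in $d(y,0)$, any $1/(R-R_0)$-Lipschitz cut-off vanishing outside $B(0,R)$ would contribute a term of order $\|f(y)\|/(R-R_0)$ to the Lipschitz norm of the product, which never becomes negligible as $R\to\infty$. The preliminary composition with $\psi$ in the codomain resolves this by forcing $\|h\|_\infty$ to stay bounded independently of $R$, which is exactly what is needed to make the domain cut-off's contribution $b/(R-R_0)$ tend to zero.
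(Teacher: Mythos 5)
Your proposal is correct and follows essentially the same route as the paper's proof: a codomain truncation via Proposition \ref{prop:idenlocalycero} to make $f$ bounded while fixing its values on $F$, followed by a radial cut-off in $M$ with the product-rule estimate $\|g\|\leq\|h\|+b/(R-R_0)$. You merely make explicit the quantitative choice of $a<b\delta$ that the paper compresses into ``we may assume,'' and your closing remark correctly identifies why the codomain step is indispensable.
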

\begin{proof}
    By Proposition \ref{prop:idenlocalycero} we may assume that $f(M)\subset B_X(0,r)$ and $F\subset B_M(0,r)$ for some $r>0$. Since $\|f\|<1$ there must be $R>0$ satisfying $\frac{r}{R-r}+\|f\|<1$. Let us then define $\widetilde\lambda:[0,\infty)\to[0,1]$ as
    $$\widetilde\lambda(t)=\begin{cases}1\;\;,&\text{ if }t\leq r,\\
    \frac{R-t}{R-r}\;\;,&\text{ if }r\leq t\leq R,\\
    0\;\;,&\text{ elsewhere.}\end{cases}$$
    Consider now $\lambda:M\to [0,1]$ as $\lambda=\widetilde\lambda\circ d(0,\cdot)$ and $g=\lambda\cdot f$. It is clear that $g=f$ on $B_M(0,r)\supset F$ and $g=0$ on $M\setminus B_M(0,R)$ so that it suffices to compute
    $$\|g\|\leq\|\lambda\|\sup f+\|f\|\sup\lambda\le\frac{r}{R-r}+\|f\|<1.$$
    The last computation holds since clearly $\|\lambda\|\le\|\widetilde\lambda\|\le\frac{1}{R-r}$ by Lemma \ref{l2}.
\end{proof}

Now we are ready to prove the following result.

\begin{theorem}\label{theo:octaunboununidis}
Let $M$ be a uniformly discrete unbounded metric space and let $X$ be a non-zero Banach space. Then the norm of $\mathcal F(M)\pten X$ is octahedral.
\end{theorem}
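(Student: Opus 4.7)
The plan is to adapt the strategy of Theorem~\ref{theo:librediscrenouniform}, replacing the use of pairs of points at arbitrarily small positive distance (which do not exist in a uniformly discrete space) by a single point $x_0$ very far from the base point $0$ (which exists by unboundedness). The molecule we adjoin will be of the form $\mu:=\frac{\delta_{x_0}}{d(x_0,0)}\otimes v$ for a unit vector $v\in S_X$, and the key tool will be Lemma~\ref{lemma:densiunidiscre0}, which localises the norming functionals of the $\mu_i$'s to a bounded ball around $0$.

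Let $\mu_1,\ldots,\mu_q\in \mathcal F(M)\pten X$ and $\varepsilon>0$. By density I assume each $\mu_i=\sum_j\lambda_{ij}\delta_{m_{ij}}\otimes x_{ij}$ is finitely supported with $m_{ij}\in M\setminus\{0\}$. Hahn--Banach provides $f_i\in\Lip(M,X^*)=(\mathcal F(M)\pten X)^*$ with $\Vert f_i\Vert<1$ and $f_i(\mu_i)>(1-\varepsilon)\Vert\mu_i\Vert$. Applying Lemma~\ref{lemma:densiunidiscre0} to each $f_i$ with $F=\bigcup_{i,j}\{m_{ij}\}$ I obtain $\tilde f_i\in\Lip(M,X^*)$ with $\Vert\tilde f_i\Vert<1$, $\tilde f_i=f_i$ on $F$, and $\tilde f_i\equiv 0$ on $M\setminus B(0,R)$, where $R$ can be chosen common to all $i$ and large enough that $F\subset B(0,R)$.

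Using unboundedness, I pick $x_0\in M$ with $d(x_0,0)\geq 2R/\varepsilon$, choose $v\in S_X$ and $v^*\in S_{X^*}$ with $v^*(v)>1-\varepsilon$, and set the $1$-Lipschitz scalar bump $h(x):=\max\{0,d(x,0)-2R\}$. Defining $g_i:=\tilde f_i+h\otimes v^*$, the crucial verification is $\Vert g_i\Vert\leq 1$. This follows from a case analysis exploiting the fact that $\tilde f_i$ vanishes on $M\setminus B(0,R)$ while $h$ vanishes on $B(0,2R)$: in all cases except $x\in B(0,R)$ and $y\in M\setminus B(0,2R)$ one of the two summands is locally constant and the bound is immediate. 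In the remaining case one uses $\Vert\tilde f_i(x)\Vert\leq\Vert\tilde f_i\Vert\cdot d(x,0)\leq\Vert\tilde f_i\Vert R$, $d(y,0)\leq d(x,y)+R$, and $\Vert\tilde f_i\Vert<1$ to obtain
\[
\Vert g_i(x)-g_i(y)\Vert\leq \Vert\tilde f_i\Vert R + (d(y,0)-2R)\leq d(x,y)-R(1-\Vert\tilde f_i\Vert)\leq d(x,y).
\]

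Finally, $\mu:=\frac{\delta_{x_0}}{d(x_0,0)}\otimes v$ lies in $S_{\mathcal F(M)\pten X}$ and direct evaluation gives $\tilde f_i(\mu)=0$ (since $x_0\notin B(0,R)$), $(h\otimes v^*)(\mu_i)=0$ (since $F\subset B(0,2R)$), and $(h\otimes v^*)(\mu)=\frac{d(x_0,0)-2R}{d(x_0,0)}v^*(v)>(1-\varepsilon)^2$. Hence
\[
\Vert\mu_i+\mu\Vert\geq g_i(\mu_i+\mu)>(1-\varepsilon)\Vert\mu_i\Vert+(1-\varepsilon)^2\geq(1-\varepsilon)^2(\Vert\mu_i\Vert+1),
\]
which yields octahedrality. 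The main obstacle is the Lipschitz estimate for $g_i$: the naive subadditive bound only gives $\Vert g_i\Vert\leq 2$, so one must genuinely exploit both the geometric separation between the active regions of $\tilde f_i$ and $h$ and the strict inequality $\Vert\tilde f_i\Vert<1$ provided by Lemma~\ref{lemma:densiunidiscre0}. It is worth noting that this argument never uses uniform discreteness; it works for every unbounded metric space $M$.
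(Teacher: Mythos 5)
Your proposal is correct, and while it shares the paper's first half verbatim (density reduction to finitely supported $\mu_i$, Hahn--Banach functionals $f_i$ with $\Vert f_i\Vert<1$, and Lemma \ref{lemma:densiunidiscre0} producing the truncations $\tilde f_i$ vanishing off $B(0,R)$), the decisive second half takes a genuinely different route. The paper uses uniform discreteness to build a norm-one bump $h$ supported at a \emph{single} remote point $x$ (only uniform discreteness makes such a one-point function Lipschitz); since the element $\mu$ norming $h$ is then abstract, the cross term $g_i(\mu)$ need not vanish and must be controlled indirectly, via the estimate $\Vert g_i\pm h\Vert\leq 1+\varepsilon$ and the sign trick giving $\vert g_i(\mu)\vert\leq 2\varepsilon$, ending with the bound $\frac{(1-\varepsilon)(\Vert\mu_i\Vert+1)-2\varepsilon}{1+\varepsilon}$. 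You instead use the radial ramp $h\otimes v^*$ with $h=\max\{0,d(\cdot,0)-2R\}$ together with the \emph{explicit} molecule $\mu=\frac{\delta_{x_0}}{d(x_0,0)}\otimes v$ (which does have norm exactly $1$, as $\Vert\delta_{x_0}\Vert=d(x_0,0)$), so that the support separation kills both cross terms identically: $\tilde f_i(\mu)=0$ and $(h\otimes v^*)(\mu_i)=0$. Your key estimate $\Vert g_i\Vert\leq 1$ is correct: the three cases (both points in $\overline{B}(0,2R)$, both outside $B(0,R)$, and the mixed case $d(x,0)<R$, $d(y,0)>2R$) exhaust all pairs, and in the mixed case the chain $\Vert \tilde f_i(x)\Vert + h(y)\leq \Vert\tilde f_i\Vert R + d(x,y)-R\leq d(x,y)$ genuinely needs $\tilde f_i(0)=0$ and $\Vert\tilde f_i\Vert<1$, exactly as you flag. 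This buys you a cleaner final bound $(1-\varepsilon)^2(\Vert\mu_i\Vert+1)$ with no $(1+\varepsilon)$ loss and no $\pm$-trick, and your closing observation is accurate: uniform discreteness is never used, so your argument proves octahedrality of $\mathcal F(M)\pten X$ for \emph{every} unbounded $M$. This does not enlarge the scope of Theorem \ref{octamain} (whose hypothesis is the union of ``unbounded'' and ``not uniformly discrete'', the latter covered by Theorem \ref{theo:librediscrenouniform}), but it simplifies the case split, since the bounded case of Theorem \ref{theo:librediscrenouniform} plus your version covers everything. Two harmless housekeeping points: assume $\varepsilon<1$ so that $d(x_0,0)\geq 2R/\varepsilon$ indeed forces $x_0\notin B(0,R)$, and note that passing to a common $R$ (the maximum over $i$, further enlarged to contain $F$) preserves all conclusions of Lemma \ref{lemma:densiunidiscre0}.
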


\begin{proof}
    As before let $\mu_1,\ldots, \mu_q\in \mathcal F(M)\pten X$ and $\varepsilon>0$, and let us find $\mu\in S_{\mathcal F(M)\pten X}$ such that $\Vert \mu_i+\mu\Vert>\frac{(1-\varepsilon)(\Vert \mu_i\Vert+1)-2\varepsilon}{1+\varepsilon}$.

By a density argument we can assume that, for every $1\leq i\leq q$, we can write $\mu_i:=\sum_{j=1}^{n_i}\lambda_{ij}\delta_{m_{ij}}\otimes x_{ij}$, for certain $n_i\in\mathbb N$, $\lambda_{ij}\in\mathbb R\setminus\{0\}, m_{ij}\in M\setminus \{0\}$ and $x_{ij}\in X\setminus\{0\}$. 

By Hahn-Banach theorem we can find, for every $i\in\{1,\ldots, q\}$, an element $f_i\in (\mathcal F(M)\pten X)^*=\Lip(M,X^*)$ such that $\Vert f_i\Vert< 1$ and $f_i(\mu_i)>(1-\varepsilon)\Vert \mu_i\Vert$. 

By Lemma \ref{lemma:densiunidiscre0} we can find, for every $1\leq i\leq q$, a Lipschitz function $g_i\in \Lip(M,X^*)$ with $\Vert g_i\Vert< 1$, such that $g_i=f_i$ on $\{m_{ij}: 1\leq j\leq n_i\}$ (in particular $g_i(\mu_i)=f_i(\mu_i)$) and $g_i=0$ on $M\setminus B(0,R)$ for $R>0$ big enough for every $1\leq i\leq q$. 

Select $x\in M$ satisfying that $\frac{R}{d(x,0)-R}<\varepsilon$, which is possible since $M$ is unbounded. Moreover, since $M$ is uniformly discrete there exists a $1$-Lipschitz function $h$ such that $h(z)=0$ if $z\neq x$. 

We claim that $\Vert g_i\pm h\Vert\leq 1+\varepsilon$ holds for $1\leq i\leq q$. Indeed, select $1\leq i\leq q$ and take $u,v\in M$ with $u\neq v$. Let us analyse $A:=\frac{\Vert g_i(u)\pm h(u)-(g_i(v)\pm h(v))\Vert}{d(u,v)}$.

Observe that if $u\neq x$ and $v\neq x$ then $A=\frac{\Vert g_i(u)-g_i(v)\Vert}{d(u,v)}< 1$. Otherwise assume, up to relabeling, that $v=x$, which implies $g_i(v)=0$. If $g_i(u)=0$ then it is immediate that $A=\frac{\Vert h(u)-h(v)\Vert}{d(u,v)}\leq \Vert h\Vert\leq 1$. Finally, if $g_i(u)\neq 0$ this implies $u\in B(0,R)$, from where we get
$$A\leq \frac{\Vert f_i(u)\Vert+\Vert h(u)-h(x)\Vert}{d(u,x)}\leq 1+\frac{\Vert f_i(u)-f_i(0)\Vert}{d(u,x)}\leq 1+\frac{d(u,0)}{d(u,x)}.$$
Now $d(u,x)=d(x,0)-d(u,0)\geq d(x,0)-R$. Moreover since $d(u,0)\leq R$ we infer that
$$1+\frac{d(u,0)}{d(u,x)}\leq 1+\varepsilon,$$
from where $\Vert g_i\pm h\Vert\leq 1+\varepsilon$ holds for $1\leq i\leq q$. 

Now consider $\mu\in S_{\mathcal F(M)\pten X}$ such that $h(\mu)>1-\varepsilon$. We claim now that $\Vert g_i(\mu)\Vert<2\varepsilon$ holds for every $i$. In fact, select $\sigma\in \{-1,1\}$ such that $\vert g_i(\mu)+\sigma h(\mu)\Vert=\vert g_i(\mu)\vert+\vert h(\mu)\vert$. Hence
$$1+\varepsilon\geq \Vert g_i+\sigma h\Vert\geq \vert g_i(\mu)+\sigma h(\mu)\vert=\vert g_i(\mu)\vert +\vert h(\mu)\vert>1-\varepsilon+\vert g_i(\mu)\vert,$$
so $\vert g_i(\mu)\vert\leq 2\varepsilon$ holds for $1\leq i\leq q$. On the other hand, since $h$ vanishes on $B(0,R)$ we infer that $h(\mu_i)=0$ holds for $1\leq i\leq q$. Hence
\[
\begin{split}(1+\varepsilon)\Vert \mu_i+\mu\Vert\geq (g_i+h)(\mu_i+\mu)& =g_i(\mu_i)+h(\mu)+g_i(\mu)\\
& \geq (1-\varepsilon)\mu_i+1-3\varepsilon\\
& \geq (1-\varepsilon)(\Vert \mu_i\Vert+1)-2\varepsilon,
\end{split}\]
so $\Vert \mu_i+\mu\Vert>\frac{(1-\varepsilon)(\Vert \mu_i\Vert+1)-2\varepsilon}{1+\varepsilon}$, as desired.

\end{proof}

To finish this section we will give a stronger property for the case in which $M'$ is infinite.

\begin{theorem}\label{theo:bidualfreeoh}
Let $M$ be a metric space such that $M'$ is infinite and let $X$ be a non-zero Banach space. Then the norm of $(\mathcal F(M)\pten X)^{**}$ is octahedral.
\end{theorem}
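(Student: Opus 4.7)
The plan is to mimic the argument of Theorem~\ref{theo:librediscrenouniform}, promoting the construction to the bidual level by taking as the witness a single molecule $\nu\in S_{\mathcal F(M)\pten X}\subseteq S_{(\mathcal F(M)\pten X)^{**}}$ at a cluster point; the new ingredient is to verify that the perturbations of the norming functionals are \emph{weakly} null in $\Lip(M,X^*)$ (and not merely weak$^*$-null), so that an abstract functional $\mu_i\in(\mathcal F(M)\pten X)^{**}$ still evaluates them near their original value.

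Concretely, I would fix $\mu_1,\ldots,\mu_q\in S_{(\mathcal F(M)\pten X)^{**}}$ and $\varepsilon>0$, pick by Hahn-Banach $f_i\in B_{\Lip(M,X^*)}$ with $\|f_i\|<1$ and $\mu_i(f_i)>1-\varepsilon$, and use that $M'$ is infinite to extract a sequence $(x_n)\subset M'$ of distinct cluster points such that the balls $B(x_n,r_n)$ are pairwise disjoint with $r_n\downarrow 0$. For each $n$ choose $y_n\in B(x_n,r_n/4)\setminus\{x_n\}$ (possible since $x_n\in M'$), and put $\nu_n:=\tfrac{\delta_{y_n}-\delta_{x_n}}{d(y_n,x_n)}\otimes v\in S_{\mathcal F(M)\pten X}$ for a fixed $v\in S_X$ together with $v^*\in S_{X^*}$ satisfying $v^*(v)>1-\varepsilon$. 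Then, exactly as in the proof of Theorem~\ref{theo:librediscrenouniform}, I would apply Proposition~\ref{loca0eident} in $X^*$ at base point $f_i(x_n)$ to produce $\varphi_n^i$ with Lipschitz norm $<1/\|f_i\|$, set $g_n^i:=\varphi_n^i\circ f_i$ (so $\|g_n^i\|<1$ and $g_n^i$ is constant near $x_n$), add a McShane-type bump $S_n\in\Lip(M,X^*)$ supported in $B(x_n,\alpha_n)$ with $\alpha_n\ll r_n$, $\|S_n\|=1$, and $S_n(y_n)-S_n(x_n)=d(y_n,x_n)v^*$, and form $G_n^i:=g_n^i+S_n$; Lemma~\ref{lemma:compuortolip} then yields $\|G_n^i\|\leq 1+\varepsilon$, and directly $G_n^i(\nu_n)\geq v^*(v)>1-\varepsilon$.

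The principal obstacle is to show that $\mu_i(G_n^i)\to\mu_i(f_i)$ as $n\to\infty$, equivalently that the sequence $(G_n^i-f_i)_n$ is weakly null in $\Lip(M,X^*)$. The bump summand $(S_n)_n$ is weakly null by Lemma~\ref{lemmaweaknull}, thanks to the pairwise disjointness of its $M$-supports. For the composition summand $g_n^i-f_i=(\varphi_n^i-I)\circ f_i$, whose $M$-support lies in $f_i^{-1}(B(f_i(x_n),R))$, I would split each $i$ into two cases. \emph{Case B:} if $(f_i(x_n))_n$ has no norm-Cauchy subsequence in $X^*$, extract a subsequence with $\|f_i(x_n)-f_i(x_m)\|\geq\delta$ for $n\ne m$ and take $R<\delta/2$ in Proposition~\ref{loca0eident}; the preimages $f_i^{-1}(B(f_i(x_n),R))$ then become pairwise disjoint in $M$, so Lemma~\ref{lemmaweaknull} gives weak nullity. \emph{Case A:} if $(f_i(x_n))_n$ has a subsequence converging in norm to some $y_i^*\in X^*$, pre-compose $f_i$ once with a flattening $\varphi_{y_i^*}$ of arbitrarily small radii so that for $n$ large the perturbations $g_n^i$ collapse to the stationary function $\tilde f_i:=\varphi_{y_i^*}\circ f_i$ (for which weak nullity is automatic), and calibrate the pre-composition, using the slack $\|f_i\|<1$ and the freedom in choosing the cluster-point sequence, so that $\mu_i(\tilde f_i)$ remains at least $1-O(\varepsilon)$. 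A diagonal extraction then synchronizes the subsequences across the finite collection $i=1,\ldots,q$.

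Finally, for $n$ large enough that $\mu_i(G_n^i)>1-2\varepsilon$ holds simultaneously for every $i$, take $\nu:=\nu_n$ and compute
\begin{equation*}
(1+\varepsilon)\,\|\mu_i+\nu\|\geq G_n^i(\mu_i+\nu)=\mu_i(G_n^i)+G_n^i(\nu_n)>(1-2\varepsilon)+(1-\varepsilon)=2-3\varepsilon,
\end{equation*}
so $\|\mu_i+\nu\|>(2-3\varepsilon)/(1+\varepsilon)$ for every $i$, witnessing the octahedrality of $(\mathcal F(M)\pten X)^{**}$ up to the prescribed tolerance. The hardest step is Case A above: composition-based perturbations have good Lipschitz-norm control but poor $M$-support localization, so handling a cluster limit of $(f_i(x_n))_n$ in $X^*$ forces the pre-composition device, whose calibration is delicate because a general $\mu\in(\mathcal F(M)\pten X)^{**}$ is only bounded-linear on $\Lip(M,X^*)$ and not continuous with respect to uniform convergence on $M$.
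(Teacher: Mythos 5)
Your overall architecture differs from the paper's: the paper does not argue in the bidual at all, but uses the duality that the norm of $(\mathcal F(M)\pten X)^{**}$ is octahedral if and only if $(\mathcal F(M)\pten X)^{*}=\Lip(M,X^*)$ has the SD2P (via \cite[Theorem 2.1]{blr14}), and then proves the formally stronger statement that $\Lip(M,X)$ has the SSD2P whenever $M'$ is infinite (Theorem \ref{theo:SSD2PLpschitz}), through the weakly null sequence criterion of \cite[Lemma 2.4]{lr2020}; you instead run a direct computation against $\mu_1,\dots,\mu_q\in(\mathcal F(M)\pten X)^{**}$ with a molecule of $\mathcal F(M)\pten X$ as witness. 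The underlying perturbation machinery (flattenings from Proposition \ref{loca0eident}, disjoint bumps, the norm control of Lemma \ref{lemma:compuortolip}, weak nullity from disjoint supports via Lemma \ref{lemmaweaknull}) is the same, and your key observation --- that the perturbations must be \emph{weakly} null, not merely weak$^*$-null, because $\mu_i$ is an arbitrary bidual element --- is exactly the point the paper's SSD2P route is designed to handle. Up to your Case A, the proposal is sound.

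Case A, however, is a genuine gap, not merely a delicate step. The pre-composed function $\tilde f_i=\varphi_{y_i^*}\circ f_i$ is \emph{not} a small perturbation of $f_i$: on $f_i^{-1}(B(y_i^*,r))$ the difference $\tilde f_i-f_i$ equals $y_i^*-f_i$, whose Lipschitz norm is of order $\Vert f_i\Vert\approx 1$ no matter how small the radii, so $\Vert\tilde f_i-f_i\Vert$ does not tend to $0$, and nothing prevents the abstract functional $\mu_i$ from norming $f_i$ precisely through its behaviour on $f_i^{-1}(B(y_i^*,R))$, in which case $\mu_i(\tilde f_i)$ can drop far below $1-\varepsilon$. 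Neither the slack $\Vert f_i\Vert<1$ nor the freedom in choosing the cluster points rescues this: when $X^*$ is finite dimensional (already for $X=\mathbb R$) every sequence $(f_i(x_n))_n$ has a norm-convergent subsequence, so Case A cannot be avoided by re-selecting the $x_n$. The repair is to abandon the uniform flattening radius $R$, which also removes the case distinction entirely: first perturb $f_i$ by Lemma \ref{lemma:separainfisequ} (a norm-small perturbation, so the estimate $\mu_i(f_i)>1-2\varepsilon$ survives) to make the values $f_i(x_n)$ pairwise distinct; pass to a subsequence along which each value is isolated among the others (either the values are $\delta$-separated, or they converge in norm and you discard the at most one term equal to the limit); then choose pairwise \emph{disjoint} balls $B(f_i(x_n),r_n^i)$ with radii possibly tending to $0$, and flatten with inner-to-outer radius ratio tending to $0$, so that $\Vert\varphi_n^i\Vert\to 1$ while $\supp(g_n^i-f_i)\subseteq f_i^{-1}(B(f_i(x_n),r_n^i))$ are pairwise disjoint; Lemma \ref{lemmaweaknull} then yields weak nullity of $(g_n^i-f_i)_n$ directly, whence $\mu_i(G_n^i)\to\mu_i(f_i)$ as you need. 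This shrinking-radii device is precisely what the paper does in Theorem \ref{theo:SSD2PLpschitz}; once you adopt it, your direct bidual argument and the paper's proof coincide in substance, the paper's detour through the SSD2P buying the stronger conclusion that $\Lip(M,X)$ has the SSD2P for every non-zero $X$.
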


For the proof we will use that the norm of $(\mathcal F(M)\pten X)^{**}$ is octahedral if, and only if, $(\mathcal F(M)\pten X)^*=\Lip(M,X^*)$ has the SD2P (see Definition \ref{defi:SD2P}). We will indeed prove something stronger, which is the symmetric strong diameter two property. Even though we are not explicitly using this property, let us point out its definition for completeness: A Banach space $X$ is said to have the \textit{symmetric strong diameter two property (SSD2P)} if, for every $k\in\mathbb N$, every finite family of slices $S_1,\ldots, S_k$ of $B_X$, and every $\varepsilon>0$, there are $x_i\in S_i$ and there exists $\varphi\in B_X$ with $\Vert \varphi\Vert>1-\varepsilon$ such that $x_i\pm \varphi\in S_i$ for every $i\in\{1,\ldots, k\}$.

Observe that the SSD2P implies the SD2P, and the converse does not hold (see \cite{anp}). In spite of being a stronger property, SSD2P is sometimes easier to check than the SD2P. This happens in the case of infinite-dimensional uniform algebras or in Banach spaces with an infinite-dimensional centralizer (c.f. e.g. \cite{hlln} and references therein). It is particularly useful in Banach spaces for which there is not a good description of the dual Banach space (as spaces of Lipschitz functions), because \cite[Theorem 2.1]{hlln} gives a criterion for the SSD2P, which only makes use of weakly convergent nets. This idea was used in \cite[Lemma 2.3]{lr2020} where, combining this idea with the Lemma \ref{lemmaweaknull}, a criterion for the SSD2P on $\Lip(M,X)$ is given, and this criterion will be the key to proving the following theorem, which will give us Theorem \ref{theo:bidualfreeoh} by simply taking dual targets.

\begin{theorem}\label{theo:SSD2PLpschitz}
Let $M$ be a metric space such that $M'$ is infinite and let $X$ be a non-zero Banach space. Then $\Lip(M,X)$ has the SSD2P.    
\end{theorem}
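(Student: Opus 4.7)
My plan is to invoke the criterion for the SSD2P of $\Lip(M,X)$ established in \cite[Lemma 2.3]{lr2020}. That criterion, combined with Lemma \ref{lemmaweaknull}, reduces the task of verifying the SSD2P to the following: given any finitely many slice-defining functionals $\Phi_1,\dots,\Phi_k\in\Lip(M,X)^*$ of norm one, any slice widths $\alpha_i>0$ and any $\varepsilon>0$, produce Lipschitz maps $h_1,\dots,h_k$ with $\Phi_i(h_i)>1-\alpha_i$ and $\Vert h_i\Vert\leq 1$, together with a sequence $(\varphi_n)\subset\Lip(M,X)$ of Lipschitz mappings having pairwise disjoint supports, $\Vert\varphi_n\Vert\to 1$, and $\Vert h_i\pm\varphi_n\Vert\to 1$ for every $i$. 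The disjoint-support condition, via Lemma \ref{lemmaweaknull}, yields $\varphi_n\to 0$ weakly, so that $\Phi_i(h_i\pm\varphi_n)\to\Phi_i(h_i)$, and an appropriate rescaling then delivers the elements required in the SSD2P.

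The construction is modeled on the proof of Theorem \ref{theo:mainDauga}. Start with any $f_i\in S_i$ with $\Phi_i(f_i)$ deep in the slice. Since $M'$ is infinite, select a sequence $(x_n)\subset M'$ of pairwise distinct cluster points with pairwise disjoint neighbourhoods $B(x_n,R_n)$, $R_n\downarrow 0$. A $k$-fold application of Lemma \ref{lemma:separainfisequ} (perturbing each $f_i$ slightly while preserving slice membership) lets me assume that the values $(f_i(x_n))_n$ are pairwise distinct for each $i$; then choose $\rho_n^i>\delta_n^i>0$ so that the balls $B(f_i(x_n),\rho_n^i)\subset X$ are pairwise disjoint in $n$, for every fixed $i$. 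Proposition \ref{loca0eident} provides $(1+\varepsilon_n)$-Lipschitz maps $\psi_n^i\colon X\to X$ that are the identity off $B(f_i(x_n),\rho_n^i)$ and constantly $f_i(x_n)$ on $B(f_i(x_n),\delta_n^i)$. Setting $h_n^i:=\psi_n^i\circ f_i$, the perturbation $h_n^i-f_i$ is supported in the preimage $f_i^{-1}(B(f_i(x_n),\rho_n^i))$, and these preimages, being preimages of disjoint sets, are pairwise disjoint in $n$; hence $h_n^i\to f_i$ weakly by Lemma \ref{lemmaweaknull}. Moreover, since $f_i$ is Lipschitz, $h_n^i$ is constantly equal to $f_i(x_n)$ on the ball $B(x_n,\delta_n^i/\Vert f_i\Vert)$.

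For the common perturbation $\varphi_n$, I would take $\varphi_n=s_n\otimes v$ where $v\in S_X$ is fixed and $s_n\colon M\to\R$ is the McShane extension of a $1$-Lipschitz function supported in $B(x_n,r_n)$ satisfying $s_n(x_n)=0$ and $s_n(y_n)=d(x_n,y_n)$ for some witness $y_n$ with $0<d(x_n,y_n)<r_n/4$ (which exists since $x_n\in M'$); here $r_n$ is chosen much smaller than $\min_i\delta_n^i/\Vert f_i\Vert$ and small enough that the balls $B(x_n,r_n)$ remain pairwise disjoint. Then $\Vert\varphi_n\Vert=1$ and $(\varphi_n)$ has pairwise disjoint supports. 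Applying Lemma \ref{lemma:compuortolip} at $m=x_n$ with $R=\delta_n^i/\Vert f_i\Vert$ and $\delta=r_n$ (using that $h_n^i$ is constant on $B(x_n,R)$ and $\varphi_n$ is constantly zero outside $B(x_n,r_n)$) yields $\Vert h_n^i\pm\varphi_n\Vert\leq(1+\varepsilon_n)(1+2r_n/(R-r_n))\to 1$. A rescaling of $h_n^i$ and $h_n^i\pm\varphi_n$ by $(1+o(1))^{-1}$ delivers, for sufficiently large $n$, elements of the corresponding slices and a perturbation $\varphi_n/(1+o(1))$ of norm exceeding $1-\varepsilon$, confirming the SSD2P.

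The main difficulty I anticipate is the simultaneous coordination for all $k$ slices: the flattening parameters $\rho_n^i,\delta_n^i$ depend on $i$, yet a single sequence $(\varphi_n)$ must serve all $i$ at once. This is what forces $r_n<\min_i\delta_n^i/\Vert f_i\Vert$, and here the infinitude of $M'$ together with the separation provided by Lemma \ref{lemma:separainfisequ} supplies precisely the room required to carry out the bookkeeping.
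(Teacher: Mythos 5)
Your proposal is correct and follows essentially the same route as the paper's proof: flattening each $f_i$ near the values $f_i(x_n)$ at a sequence of cluster points via Proposition \ref{loca0eident} (after separating values with Lemma \ref{lemma:separainfisequ}), adding a common disjointly supported bump $s_n\otimes v$, controlling $\Vert h_n^i\pm\varphi_n\Vert$ with Lemma \ref{lemma:compuortolip}, and concluding through the weak-convergence criterion for the SSD2P from \cite{lr2020}. The only cosmetic differences are notational (the paper invokes \cite[Lemma 2.4]{lr2020} with $n$-dependent approximants $g_{n,i}\to f_i$ weakly, exactly as your construction in fact produces, rather than fixed slice elements as in your opening paragraph).
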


\begin{proof}
Let $f_1,\ldots, f_k\in S_{\Lip(M,X)}$. Let $\{x_n\}\subseteq M'$ be a sequence of different cluster points. We can assume with no loss of generality that $f_i(x_n)\neq f_i(x_m)$ for every $n,m\in\mathbb N$ with $n\neq m$ and every $1\leq i\leq k$.

Consequently, for every $1\leq i\leq k$ and every $n\neq m$ we can find $r_n^i>0$ such that $\{B(f_i(x_n),r_n^i): n\in\mathbb N\}$ is a sequence of pairwise disjoint balls for every $n\in\mathbb N$. Now given $1\leq i\leq k$ and given $n\in\mathbb N$ we can find Lipschitz functions $\varphi_n^i:X\longrightarrow X$ such that $\Vert \varphi_n^i\Vert\rightarrow 1$ ($n\rightarrow \infty$) for every $i$, such that $\varphi_n^i(z)=z$ holds for every $z\in X\setminus B(f_i(x_n),r_n^i)$ and such that $\varphi_n^i(z)=f_i(x_n)$ holds for $z$ on a certain ball centred at $f_i(x_n)$.

Define $g_{n,i}:=\varphi_n^i\circ f_i:M\longrightarrow X$. Observe that $\lim \Vert g_{n,i}\Vert=1$. On the other hand observe that $\supp(g_{n,i}-f_i)\cap \supp(g_{m,i}-g)=\emptyset$ if $n\neq m$ for every $1\leq i\leq k$.

Now observe that $g_{n,i}$ is constant at an open ball centred at $x_n$ so we can find $r_n>0$ such that $g_{n,i}(x)=g_{n,i}(x_n)$ holds for every $x\in B(x_n,r_n)$ for every $n\in\mathbb N$ and every $1\leq i\leq k$.

Now given $n\in\mathbb N$ select $0<\beta_n<r_n$ such that $\frac{\beta_n}{r_n-\beta_n}\rightarrow 0$ and we construct a Lipschitz function $h_n:M\longrightarrow X$ with $\Vert h_n\Vert=1$ and $h_n(x)=0$ for every $x\in \{x_n\}\cup X\setminus B(x_n,\beta_n)$.

It is immediate that $\supp(h_n)\cap \supp(h_m)=\emptyset$ if $n\neq m$. Now we claim that $\Vert g_n^i\pm h_n\Vert\rightarrow 1$ for $1\leq i\leq k$.

Indeed, given $1\leq i\leq k$ and given $n\in\mathbb N$, we get by applying Lemma \ref{lemma:compuortolip} to $m=x_n$, $R=r_n$ and $\delta=\beta_n$ that $\Vert g_n^i\pm h_n\Vert\leq \Vert g_{n,i}\Vert(1+\frac{2\beta_n}{r_n-\beta_n})\rightarrow 1$.

Now we have that the sequences $(g_{n,i})$ and $(h_n)$ satisfy the requirements of \cite[Lemma 2.4]{lr2020}, so applying the above result we get that $\Lip(M,X)$ has the SSD2P, and the proof is finished.\end{proof} 

\section{Universally octahedral domains}\label{section:universaloctaedral}

In \cite{rz2023} the question of when a Banach space $X$ satisfies that $L(Y,X)$ is octahedral for every Banach space $Y$ is addressed (these spaces are called \textit{universally octahedral}, \cite[Definition 3.1]{rz2023}). The following result is proved.

\begin{theorem}\label{theo:carauniocta}
Let $X$ be a Banach space. The following are equivalent:
\begin{enumerate}
\item For every Banach space $Y$ and every subspace $H\subseteq L(Y,X)$ containing the finite-rank operators, the space $H$ is octahedral.
\item $X$ is universally octahedral.
\item For every finite dimensional Banach space $Y$, the space $L(Y,X)$ is octahedral.
\item For every finite dimensional uniformly convex Banach space $Y$, the space $L(Y,X)$ is octahedral.
\item For every $1<p<\infty$ and every $n\in\mathbb N$ the space $L(\ell_p^n,X)$ is octahedral.
\item For every $\varepsilon>0$, for every finite dimensional subspace $Z$ of $X$ and for every $n\in\mathbb N$, there exists an element $T:\ell_\infty^n\longrightarrow X$ with $\Vert T\Vert\leq 1$ and such that
$$\Vert z+T(y)\Vert\geq (1-\varepsilon)(\Vert z\Vert+\Vert y\Vert)$$
holds for every $y\in \ell_\infty^n$ and every $z\in Z$.
\end{enumerate}
\end{theorem}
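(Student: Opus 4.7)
The chain $(1) \Rightarrow (2) \Rightarrow (3) \Rightarrow (4) \Rightarrow (5)$ is a sequence of specializations: take $H = L(Y,X)$ itself for $(1) \Rightarrow (2)$ and progressively restrict the class of admissible domains, noting that every $\ell_p^n$ with $1 < p < \infty$ is a finite-dimensional uniformly convex Banach space. Therefore the real content lies in closing the loop through $(5) \Rightarrow (6) \Rightarrow (1)$.

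For $(6) \Rightarrow (1)$, given $H \subseteq L(Y,X)$ containing the finite-rank operators, $T_1, \ldots, T_k \in S_H$ and $\varepsilon > 0$, I would choose $y_i \in S_Y$ with $\Vert T_i(y_i)\Vert > 1-\varepsilon$ and $y_i^* \in S_{Y^*}$ with $y_i^*(y_i) = 1$ via Hahn--Banach. Setting $z_i := T_i(y_i)$, $Z := \spann\{z_1,\ldots, z_k\}$, apply (6) with $n = k$ to obtain $T_0\colon \ell_\infty^k \to X$ satisfying $\Vert T_0\Vert \leq 1$ and $\Vert z + T_0(u)\Vert \geq (1-\varepsilon)(\Vert z\Vert + \Vert u\Vert_\infty)$. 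The finite-rank composition $T := T_0 \circ \phi$, where $\phi(y) := (y_1^*(y),\ldots, y_k^*(y))$, lies in $H$, satisfies $\Vert T\Vert \geq 1-\varepsilon$ (since $\Vert \phi(y_1)\Vert_\infty \geq y_1^*(y_1) = 1$), and evaluating at $y_i$ gives $\Vert T_i + T\Vert \geq (1-\varepsilon)(\Vert z_i\Vert + 1) \geq 2(1-\varepsilon)^2$. A small renormalization then turns $T/\Vert T\Vert$ into a witness for the octahedrality of $H$.

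For $(5) \Rightarrow (6)$, fix $Z$ finite-dimensional, $n \in \N$ and $\varepsilon > 0$, and choose $p \in (1, \infty)$ large enough that $n^{1/p}$ is close to $1$. Taking an $\varepsilon$-net $\{z_i\}$ of $S_Z$ and an $\varepsilon$-net $\{\phi_l\}$ of $S_{(\ell_p^n)^*}$, form the norm-one operators $R_{i,l}(y) := \phi_l(y)\,z_i$ in $L(\ell_p^n, X)$. By (5), octahedrality of $L(\ell_p^n, X)$ produces $T_0 \in S_{L(\ell_p^n, X)}$ with $\Vert R_{i,l} + T_0\Vert \geq 2-\varepsilon$ for every $(i,l)$. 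The resulting norming vectors $y^{(i,l)}$ must almost attain $\phi_l$, and strict convexity of $\ell_p^n$ pins them close to the unique norming direction of $\phi_l$; this forces $T_0(y^{(i,l)})$ and $z_i$ to be almost aligned. A continuity argument over the nets then upgrades the finitely many pointwise inequalities to the uniform bound $\Vert z + T_0(y)\Vert \geq (1-\varepsilon')(\Vert z\Vert + \Vert y\Vert_p)$ valid for all $z \in Z$ and $y \in \ell_p^n$. Viewing $T_0$ as an operator from $\ell_\infty^n$ via the inclusion $\ell_\infty^n \hookrightarrow \ell_p^n$ (of norm $n^{1/p}$), rescaling so that the operator norm is at most $1$, and using $\Vert y\Vert_p \geq \Vert y\Vert_\infty$ delivers the operator $T\colon \ell_\infty^n \to X$ required by (6).

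The difficult step is $(5) \Rightarrow (6)$, where one must bridge operators from $\ell_p^n$ (for which the hypothesis applies) and from $\ell_\infty^n$ (the target), two spaces at Banach--Mazur distance $n^{1/p}$. Since $n$ is fixed before $p$ is chosen, $p$ can be taken large enough that this distortion is negligible compared to $\varepsilon$, while still keeping $p$ finite so that the strict convexity of $\ell_p^n$ (whose modulus degenerates as $p \to \infty$) supplies the concentration needed for norming vectors to be essentially unique. Balancing these opposing requirements---large $p$ for Banach--Mazur closeness, but not so large that the modulus of convexity becomes useless---and coordinating this with the resolution of the $\varepsilon$-nets involved is where the main bookkeeping lies.
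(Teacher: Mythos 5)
You should know at the outset that the paper contains no proof of Theorem \ref{theo:carauniocta}: it is quoted verbatim from \cite{rz2023} (where it is Theorem 3.6), so there is no in-paper argument to compare yours against, and your proposal has to stand on its own. In outline it does. The chain $(1)\Rightarrow(2)\Rightarrow(3)\Rightarrow(4)\Rightarrow(5)$ is indeed pure specialisation, and your $(6)\Rightarrow(1)$ is correct as written: $T=T_0\circ\phi$ is finite rank and hence in $H$, the case $z=0$ of (6) gives $\Vert T\Vert\geq 1-\ep$, evaluation at $y_i$ gives $\Vert T_i+T\Vert\geq(1-\ep)(\Vert z_i\Vert+1)$, and renormalisation costs only $\vert 1-\Vert T\Vert\vert\leq\ep$. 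Your $(5)\Rightarrow(6)$ via rank-one operators $\phi_l\otimes z_i$ and uniform convexity of $\ell_p^n$ is also viable, and it is at least in the spirit of the cited proof: note that the hypothesis (5) is phrased for $1<p<\infty$ precisely so that almost-norming vectors can be localised as you do, and the in-paper proof of Theorem \ref{theo:caraunivdomain} reports that ``an inspection in the proof of \cite[Theorem 3.6]{rz2023}'' yields finite-rank witnesses, which matches the shape of your construction.

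Two spots in $(5)\Rightarrow(6)$ need sharpening, though neither is fatal. First, the passage from the finitely many net inequalities to the uniform bound is not ``a continuity argument'' alone: the nets only control pairs on the spheres, while (6) is an inequality on the whole cone of pairs $(\alpha\hat z,\beta\hat y)$ with $\alpha,\beta\geq 0$. The standard repair is the support-functional step: from $\Vert z_i+T_0(y_l)\Vert\geq 2-\delta$ pick $f\in S_{X^*}$ with $f(z_i+T_0(y_l))\geq 2-\delta$, deduce $f(z_i)\geq 1-\delta$ and $f(T_0(y_l))\geq 1-\delta$, hence $\Vert\alpha z_i+\beta T_0(y_l)\Vert\geq(1-\delta)(\alpha+\beta)$ for all $\alpha,\beta\geq0$, and only then perturb to arbitrary $z\in Z$ and $y\in\ell_p^n$ using the nets. (Relatedly, the almost-norming vector of $R_{i,l}+T_0$ may lie near $-y_{\phi_l}$ rather than $y_{\phi_l}$; oddness of both operators makes the two cases give the same conclusion, but this should be said.) Second, the ``balance'' you describe between large $p$ and the degenerating modulus of convexity is not a genuine tension: the quantifiers decouple it. You fix $p$ first so that $n^{1/p}<1+\ep/4$; for that fixed $p$ the modulus of convexity of $\ell_p^n$ is some positive function, however small, and since octahedrality of $L(\ell_p^n,X)$ holds for \emph{every} positive parameter, you choose $\delta$ afterwards as small as that modulus and the net resolutions require. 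With these routine repairs your argument closes the loop correctly.
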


A natural question in view of the previous result is to ask for ``universal octahedral domains'' in the following sense: when a Banach space $X$ satisfies that $L(X,Y)$ is octahedral for every Banach space $Y$?

The following gives a complete characterisation.

\begin{theorem}\label{theo:caraunivdomain}
    Let $X$ be a Banach space. The following assertions are equivalent:
\begin{enumerate}
    \item For every Banach space $Y$, $H$ is octahedral for every subspace $H\subseteq L(X,Y)$ with $\mathcal F(X,Y)\subseteq H$.
    \item For every Banach space $Y$, $L(X,Y)$ is octahedral.
    \item $X^*$ is universally octahedral, i.e. for every Banach space $Y$, $L(Y,X^*)$ is octahedral.
    \item For every $\varepsilon>0$, for every finite dimensional subspace $Z$ of $X^*$ and for every $n\in\mathbb N$, there exists an operator $T:\ell_\infty^n\longrightarrow X^*$ with $\Vert T\Vert\leq 1$ and such that
$$\Vert z+T(y)\Vert\geq (1-\varepsilon)(\Vert z\Vert+\Vert y\Vert)$$
holds for every $y\in \ell_\infty^n$ and every $z\in Z$.    
\end{enumerate}
\end{theorem}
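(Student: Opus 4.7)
The plan is to close the cycle $(1)\Rightarrow(2)\Rightarrow(3)\Leftrightarrow(4)\Rightarrow(1)$, with essentially all the work concentrated in the last implication. The first three links are formal: $(1)\Rightarrow(2)$ holds because $L(X,Y)$ is trivially a subspace of itself containing $\mathcal F(X,Y)$; $(2)\Rightarrow(3)$ uses the standard identifications $L(Y,X^*)=(Y\pten X)^*=(X\pten Y)^*=L(X,Y^*)$, so that given any Banach space $Y$ we may apply $(2)$ with codomain $Y^*$ to conclude that $L(Y,X^*)$ is octahedral; and $(3)\Leftrightarrow(4)$ is obtained by applying Theorem~\ref{theo:carauniocta} to the Banach space $X^*$, since items $(2)$ and $(6)$ of that theorem translate verbatim into our $(3)$ and $(4)$.

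The bulk of the argument is the implication $(4)\Rightarrow(1)$. Given a subspace $H\subseteq L(X,Y)$ with $\mathcal F(X,Y)\subseteq H$, fix $T_1,\dots,T_k\in S_H$ and $\varepsilon>0$; the aim is to build a single rank-$k$ operator $S\in\mathcal F(X,Y)$ witnessing octahedrality at all $T_i$ simultaneously. Choose $x_i\in S_X$ and $y_i^*\in S_{Y^*}$ with $y_i^*(T_i(x_i))>1-\delta$ (with $\delta>0$ to be fixed small at the end) and set $Z:=\spann\{y_i^*\circ T_i:1\leq i\leq k\}\subseteq X^*$. Apply $(4)$ to $Z$ with $n=k$ and parameter $\delta$ to obtain an operator $T:\ell_\infty^k\to X^*$ of norm at most $1$ satisfying
\[
\|z+T(\alpha)\|\geq (1-\delta)(\|z\|+\|\alpha\|_\infty),\qquad z\in Z,\ \alpha\in\ell_\infty^k.
\]
Set $u_i:=T(e_i)$, pick $y_i\in S_Y$ with $y_i^*(y_i)>1-\delta$, and define $S:=\sum_{i=1}^k u_i\otimes y_i\in\mathcal F(X,Y)\subseteq H$.

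The upper bound $\|S\|\leq 1$ follows from a standard adjoint identity: for $x\in B_X$, the vector with coordinates $u_i(x)$ equals $T^*(x)\in\ell_1^k$, so $\sum_i|u_i(x)|\leq\|T\|\leq 1$ and hence $\|S(x)\|\leq\sum_i|u_i(x)|\|y_i\|\leq 1$. For the lower bound at each $T_i$, applying $y_i^*$ pointwise and passing to the supremum over $x\in B_X$ yields
\[
\|T_i+S\|\geq\bigl\|y_i^*\circ T_i+T(\beta_i)\bigr\|_{X^*},\qquad \beta_i:=(y_i^*(y_j))_{j=1}^k\in\ell_\infty^k;
\]
since $\|y_i^*\circ T_i\|>1-\delta$ and $\|\beta_i\|_\infty\geq y_i^*(y_i)>1-\delta$, the property from $(4)$ gives $\|T_i+S\|\geq 2(1-\delta)^2$, which exceeds $2-\varepsilon$ for sufficiently small $\delta$ (a final normalization of $S$ to the sphere of $H$ is routine).

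The main obstacle is that the functionals $y_i^*\in S_{Y^*}$ attached to different $T_i$ may point in quite unrelated directions, so a naive rank-one ansatz $S=u\otimes y$ cannot work: a single $y\in S_Y$ cannot be paired close to $1$ with all $y_i^*$ at once. The rigidity of condition $(4)$ dissolves this obstacle: one is not forced to arrange the off-diagonal pairings $y_i^*(y_j)$ to vanish, because the $\ell_\infty^k$-structure encoded by $T$ automatically absorbs them into the single quantity $\|\beta_i\|_\infty$, which stays close to $1$ as soon as the diagonal pairing $y_i^*(y_i)$ does.
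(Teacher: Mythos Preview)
Your proof is correct, and the route you take for the main implication is genuinely different from the paper's. The paper closes the cycle via $(3)\Rightarrow(1)$: it passes to the adjoints $T_i^*:Y^*\to X^*$, invokes universal octahedrality of $X^*$ (together with an inspection of the proof of \cite[Theorem 3.6]{rz2023}) to get a finite-rank $G:Y^*\to X^*$ with $\|T_i^*+G\|>2-\varepsilon$, and then applies the Oja--P{\~o}ldvere principle of local reflexivity \cite[Theorem 2.5]{op2007} to replace $G$ by $T^*$ for some $T\in\mathcal F(X,Y)$ agreeing with $G$ at the finitely many relevant $y_i^*$. You instead prove $(4)\Rightarrow(1)$ directly, by building the finite-rank witness $S=\sum_i u_i\otimes y_i$ explicitly from the operator $T:\ell_\infty^k\to X^*$ supplied by $(4)$ and almost-norming vectors $y_i\in S_Y$. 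Your argument is fully self-contained---in particular it avoids the external local-reflexivity result---and the observation that the off-diagonal pairings $y_i^*(y_j)$ are harmlessly absorbed into $\|\beta_i\|_\infty$ is a nice way of exploiting the $\ell_\infty^k$-structure in $(4)$. The paper's approach, on the other hand, makes the dependence on the characterisation Theorem~\ref{theo:carauniocta} (and on the adjoint operation) more transparent and does not need to track the explicit form of $S$. One small point: your normalisation step is indeed routine, but it is worth noting that the lower bound $\|S\|\geq\|T_i+S\|-\|T_i\|\geq 2(1-\delta)^2-1$ is what makes it so.
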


\begin{proof}
(1)$\Rightarrow$(2) is immediate, and (3)$\Leftrightarrow$(4) is precisely Theorem \ref{theo:carauniocta}. To prove (2)$\Rightarrow$(3), take a Banach space $Y$. We want to prove that $L(Y,X^*)$ is octahedral. But this is immediate since the identification $L(Y,X^*)=L(X,Y^*)$ is isometric (c.f. e.g. \cite[pp. 24]{ryan}), so the conclussion is immediate from (2).

Let us prove (3)$\Rightarrow$(1). To this end take $Y$ an arbitrary Banach space and $H\subseteq L(X,Y)$ containing $\mathcal F(X,Y)$. In order to prove that $H$ is octahedral, take $T_1,\ldots T_n\in S_H$ and $\varepsilon>0$, and let us find $T\in S_H$ such that
$$\Vert T_i+T\Vert>2-\varepsilon.$$
To this end, consider adjoint operators $T_i^*:Y^*\longrightarrow X^*$ for every $1\leq i\leq n$, which are norm-one operators since taking adjoint is an isometric action. Since $X^*$ is universally octahedral, an inspection in the proof of \cite[Theorem 3.6]{rz2023} allows to find a finite rank operator $G:Y^*\longrightarrow X^*$ with $\Vert G\Vert=1$ and $\Vert T_i^*+G\Vert>2-\varepsilon$ for $1\leq i\leq n$. For every $i\in\{1,\ldots, n\}$, find an element $y_i^*\in S_{Y^*}$ such that 
$$\Vert T_i^*(y_i^*)+G(y_i^*)\Vert>2-\varepsilon\ \forall 1\leq i\leq n.$$
Let $\eta>0$. Applying \cite[Theorem 2.5]{op2007} for $F:=\spann\{y_1^*,\ldots, y_n^*\}\subseteq Y^*$ and $\eta$, there exists an operator $T\in \mathcal F(X,Y)\subseteq H$ such that $\vert \Vert T\Vert-\Vert G\Vert\vert<\eta$ and
$$T^*(y_i^*)=G(y_i^*)$$
holds for every $1\leq i\leq n$. In particular, given $1\leq i\leq n$, we have
$$2-\varepsilon<\Vert T_i^*(y_i^*)+G(y_i^*)\Vert=\Vert T_i^*(y_i^*)+T^*(y_i^*)\Vert\leq \Vert T_i^*+T^*\Vert=\Vert T_i+T\Vert.$$
Since $1-\eta<\Vert T\Vert<1+\eta$ we have $T/\Vert T\Vert\in S_H$ and
\[\begin{split}
\left\Vert T_i+\frac{T}{\Vert T\Vert} \right\Vert\geq \Vert T_i+T\Vert-\left\Vert T-\frac{T}{\Vert T\Vert} \right\Vert& >2-\varepsilon-\Vert T\Vert \frac{\vert\Vert T\Vert-1\vert}{\Vert T\Vert}\\
& >2-\varepsilon-(1+\eta)\frac{\eta}{1-\eta},
\end{split}\]
 and the conclusion follows from the arbitrariness of $\varepsilon$ and $\eta$.
\end{proof}

%A particular consequence in Lipschitz free spaces is the following.

%\begin{theorem}\label{theo:solulr2020}
%Let $M$ be a metric space such that $\Lip(M)$ is octahedral and $X$ be a non-trivial Banach space. Then $\Lip(M,X)$ is octahedral.
%\end{theorem}

%\begin{proof}
%Observe that the identification $\Lip(M,X)=L(\mathcal F(M),X)$ is isometric. Now $L(\mathcal F(M),X)$ is octahedral for every $X$ if, and only if, $L(Y,\Lip(M))$ is octahedral for every Banach space $Y$ in virtue of the above theorem, which is in turn equivalent to the fact that $\Lip(M)$ is octahedral by \cite[Theorem 3.1]{lr2020}, as desired.
%\end{proof}

%\begin{remark}
%The previous theorem gives a solution to \cite[Problem 1.3]{lr2020}, completing the partial solution given in \cite[Theorem 3.1]{lr2020} for dual codomains.
%\end{remark}

\section*{Acknowledgements}  During the celebration of the congress  ``$\AE$asy to define, hard to analyse : First conference on Lipschitz free spaces'' celebrated in Besan\c con (France) in September 2023, Richard Smith kindly left us Lemma \ref{loca0eident} together with its proof. The authors are deeply grateful to him because this lead to a major improvement of the results with respect to the first preprint version of this manuscript.

The authors also thank Gin\'es L\'opez-P\'erez for fruitful conversations on the topic of the paper.

This work was supported by MCIN/AEI/10.13039/501100011033: Grant PID2021-122126NB-C31 and by Junta de Andaluc\'ia: Grants FQM-0185 and PY20\_00255.
The research of Rub\'en Medina was also supported by FPU19/04085 MIU (Spain) Grant, by GA23-04776S project (Czech Republic) and by SGS22/053/OHK3/1T/13 project (Czech Republic). The research of Abraham Rueda Zoca was also supported by Fundaci\'on S\'eneca: ACyT Regi\'on de Murcia grant 21955/PI/22 and by Generalitat Valenciana project CIGE/2022/97.

\end{document}